\newcommand{\Bin}{\operatorname{Bin}}
\newcommand{\LP}{\operatorname{LP}}
\newcommand{\Fc}{\F_{\smash{\sY|\sX}}}
\newcommand{\footremember}[2]{
   \footnote{#2}
    \newcounter{#1}
    \setcounter{#1}{\value{footnote}}
}
\newcommand{\footrecall}[1]{
    \footnotemark[\value{#1}]
}
\newcommand{\ul}{\underline}
\begin{document}

\title{Generative Modelling via Quantile Regression}

\author{%
    Johannes Schmidt-Hieber\footremember{alley}{University of Twente}\footremember{trailer}{The research has been supported by the NWO/STAR grant 613.009.034b and the NWO Vidi grant
VI.Vidi.192.021.}%
    \ and Petr Zamolodtchikov\footrecall{alley} \footrecall{trailer}%
}

\date{\today}

\maketitle

\begin{abstract}
    We link conditional generative modelling to quantile regression. We propose a suitable loss function and derive minimax convergence rates for the associated risk under smoothness assumptions imposed on the conditional distribution. To establish the lower bound, we show that nonparametric regression can be seen as a sub-problem of the considered generative modelling framework. Finally, we discuss extensions of our work to generate data from multivariate distributions.
\end{abstract}

\section{Introduction}
\label{sec.intro}

Generative modelling emerged as its own subfield of artificial intelligence with a series of successes \cite{kingma2013auto, rezende2015variational, goodfellow2020generative} including the recent introduction of diffusion models \cite{song2020score} and ChatGPT \cite{NIPS20173f5ee243}. These techniques enable computers to produce realistic fake pictures or human-like textual interactions. The potency and novelty of the resulting tools come with an equal amount of complexity, as is suggested by a certain lack of consensus---see, e.g.\ \cite{stanczuk2021wasserstein}, where the authors question the well-foundedness of Wasserstein GANs \cite{arjovsky2017wasserstein}. This pinpoints a need for a more theoretical understanding of generative modelling in all its aspects. Informally, generative modelling corresponds to the following task. Given samples from a distribution $\P,$ the aim is to produce a surrogate distribution $\wh \P$ such that samples from $\wh \P$ are indistinguishable from the original ones.\\

Inverse transform sampling provides a simple yet effective tool for sampling from a univariate distribution $\P_{\sY}$ whenever the associated cumulative distribution function (cdf) $\F_{\sY}$ admits a left-inverse. If $U$ is uniform on the unit interval, then 
\begin{align*}
    \F_{\sY}^{-1}(U) \sim \P_{\sY},
\end{align*}
with $\F_{\sY}^{-1}$ the quantile function. Given a covariate $X,$ approximating samples from the distribution of $Y|X$ will be called \emph{conditional generative modelling} and can be achieved through the conditional quantile function. Estimating the conditional quantile function is commonly referred to as quantile regression. Formally, the aim is to reconstruct
\begin{align}
    \label{eq.quantile.function}
    \Fc^{-1}(\tau|x) \colon (\tau, x) \mapsto \inf\big\{t \in \RR : \Fc(t|x) \geq \tau\big\}
\end{align}
from an i.i.d.\ sample $S_n:=\{(X_1, Y_1), \dots, (X_n, Y_n)\}$ with the same distribution as $(X,Y).$ Here, $\tau$ is called the quantile level and $\Fc$ is the cdf of $Y|X,$
\begin{align*}
    \Fc(t|x) := \PP\big\{Y\leq t | X = x\big\}.
\end{align*}
While regression focuses on the conditional mean given the covariates, quantile regression, introduced by \cite{koenk} in 1978, also captures the effects of the covariates on the entire distribution of the response variable. This results in a finer understanding of the regressors' impact on the outcome and makes quantile regression an attractive tool in various fields of research such as economics \cite{MARROCU201513}, clinical research \cite{cleophas2022quantile}, or environmental science \cite{cade2003gentle}. \\

From a theoretical standpoint, inferential aspects of quantile regression are well understood. Asymptotic normality is obtained in various settings \cite{koenker2005quantile, abrevaya2005isotonic, dette2008non}, and, more recently, analyses have been carried out under modern settings such as trend filtering \cite{madrid2022risk}, high-dimensional sparse models \cite{MR2797841} and ReLU networks regression \cite{padilla2022quantile}. Much of the research has focused on estimating the quantile function at a finite set of fixed levels, which, although sufficient for most applications, is unsuitable for generative purposes—obtaining good quality samples from the conditional distribution $\P_{\sY|\sX}$ requires reconstruction of the whole function $\Fc^{-1}.$ Few research has given insights into this aspect. \cite{wang2023generative} suggests using deep learning to fit the entire conditional quantile function. They show empirically that such an approach is promising. However, more theory is required for a proper understanding of their results. \cite{MR4047593} and \cite{MR2797841} obtained uniform rates in $(\tau, x)$ for quantile regression. In particular, \cite{MR4047593} consider the setting of isotonic regression with smoothness assumptions. They achieve rates comparable to nonparametric regression uniformly over $\tau$ in an increasing sequence of compact subsets of $(0, 1).$ The choice of these subsets mitigates the inconvenient property that the quantile function is typically divergent at zero and one, and a finite sample is insufficient to make a good guess around the boundary values $0$ and $1.$ Other notable results are obtained in \cite{volgushev2019distributed} for approximately linear models and a fixed range of quantile levels.\\

We study the generative properties of quantile regression estimates. In Section \ref{sec.2}, we introduce a loss function $\mL$ that averages the error over all quantile levels, circumventing the above mentioned thresholding issue. We then introduce the estimator and our working assumptions in the second part of Section \ref{sec.2}. Upper bounds are provided for a smoothed conditional cdf estimator for quantile regression over a natural extension of H\"older classes in Section \ref{sec.3}. We then show that nonparametric regression under smoothness assumptions can be seen as a sub-problem of quantile regression under the $\mL$ loss, implying the minimax optimality of the derived rates. The theoretical contributions are closely related to the very recent work \cite{PIC20231564}. For a more in-depth discussion see Section \ref{sec.3}. A short discussion of possible extensions is finally provided in Section \ref{sec.discussion}. All the proofs are deferred to the appendix.\\

\textbf{Notation.} Let $\|x\| := \max_{1\leq i \leq d}|x_i|$ be the infinity norm on $\RR^d$ and let $\B(x,r)$ denote the closed ball centered in $x$ with radius $r$ in the infinity norm. $\|\cdot\|_2$ denotes the Euclidean norm. For a function $f \colon A \to \RR,$ we write $\|f\|_{\infty} := \sup_{x \in A} |f(x)|.$  For a symmetric $k\times k$ matrix $M,$ denote by $\lambda_1(M) \leq \dots \leq \lambda_k(M)$ its ordered eigenvalues. The uniform distribution on the unit interval is denoted by $\mU,$ and $\mN(\mu,\sigma^2)$ denotes the normal distribution with mean $\mu$ and variance $\sigma^2.$ We make use of multi-index notations. For two integers $\ell$ and $d,$ let $m := \card\{\balpha \in \{0, \dots, \ell\}^d: |\balpha| \leq \ell\}.$ For $\balpha \in \{0, \dots, \ell\}^d,$ let
\begin{align*}
    \partial^{\balpha} := \prod_{j=1}^d \partial_j^{\alpha_j},
\end{align*}
where $\partial_j$ denotes the partial derivative with respect to the $j$-th coordinate. Additionally, $|\balpha| := \sum_{i=1}^d \alpha_i, \ \balpha ! := \alpha_1 !\dots \alpha_d !,$ and $x^{\balpha} = \prod_{i=1}^d x_i^{\alpha_i}.$ For any vector indexed in $\balpha \in \{0, \dots, \ell\}^d,$ the coordinates are ordered lexicographically on $\{0, \dots, \ell\}^d.$ If $f$ is a function of two variables $(x, t) \in \RR^d \times \RR,$ then, the differential operators are applied to the functions $x \mapsto f(x, t)$ or $x \mapsto f(t|x)$ with fixed $t.$

\section{Problem setup}
\label{sec.2}
    We assume that $X \sim \P_{\sX}$ is supported on $[0, 1]^d$ and $Y \sim \P_{\sY},$ where $\P_{\sY}$ is atomless and univariate. Conditional generative modelling aims to transport a simple distribution, say the uniform distribution $\mU,$ towards $\P_{\sY|\sX}.$ More precisely, the goal is to approximate a measurable function $T$ such that the pushforward of $\mU$ through $T$ is $\P_{\sY|\sX}.$ That is, $T(U|X) \sim \P_{\sY|\sX},$ for $U \sim \mU.$ Such a function is called a transport map. As mentioned in Section \ref{sec.intro}, the conditional quantile function \eqref{eq.quantile.function} is a suitable transport map. This links conditional generative modelling to quantile regression.
    \subsection{Model and loss}
    If we know $\Fc^{-1},$ we can generate $Y$ by sampling $U$ uniformly on $[0,1]$ and interpreting $\Fc^{-1}$ as a conditional generative model, that is,
    \begin{align}
    \label{eq.generative.model}
        Y = \Fc^{-1}(U|X).
    \end{align}
    For unknown $\Fc^{-1},$ we first construct an estimator $\smash{\wh \F}^{-1}_{\smash{\sY|\sX}}$ from the sample $S_n=\{(X_1,Y_1),\ldots,(X_n,Y_n)\}$ and obtain instead
    \begin{align*}
        \wh Y = \wh Y(S_n) := \smash{\wh \F}^{-1}_{\smash{\sY|\sX}}(U|X).    
    \end{align*}
    To measure the fidelity of the generated variable, we introduce the loss 
    \begin{align}
    \label{eq.loss}
        \mL(\wh Y, Y) := \E\big[|\wh Y - Y|\big|S_n\big],
    \end{align}
    and the associated risk is
    \begin{align*}
        \E\big[|\wh Y - Y\big|\big].
    \end{align*} 
    Plugging the expressions of $\wh Y$ and $Y$ into \eqref{eq.loss} shows that $\mL$ is the $L^1( \mU\otimes \P_{\sX})$ distance between the conditional quantile functions, which corresponds to an averaged Wasserstein-$1$ distance between the conditional distributions. Finally, it is known that, in the univariate setting, the Wasserstein-$1$ distance can be expressed as the $L^1$ distance between the cdfs (see, e.g., \cite[Corollary 2.2]{Thorpe2017IntroductionTO}) so that
    \begin{align*}
        \mL(\wh Y, Y) = \E_{\P_{\sX}}\bigg[\int_{\RR} \big|\Fc(t| X) - \F_{\smash{\wh \sY}|\sX}(t| X)\big|\, dt\bigg].
    \end{align*}
    Hence, quantile regression in $\mL$-loss is equivalent to the estimation of the conditional cdf. The loss $\mL$ is finite as long as both variables admit a first absolute moment and no thresholding is required on the range of estimated quantile levels. This makes $\mL$ a more convenient choice if compared to the sup-norm loss which has issues for quantiles close to $0$ and $1$.\\
    \subsection{Local polynomial estimators}
    The proposed estimator in this article is based on local polynomial estimators, which we briefly summarize. Let $\ell$ be an integer and recall that $m$ is the cardinality of $\{\balpha \in \{0, \dots, \ell\}^d: |\balpha| \leq \ell\}.$ Let $V(x) := (x^{\balpha}/\balpha!)_{0 \leq |\balpha| \leq \ell},$
    where the coordinates are ordered lexicographically on $\{0, \dots, \ell\}^d.$ The local polynomial estimator of order $\ell$ with kernel $K(u) = \mathds{1}(u \in \B(0, 1))$ is defined as
    \begin{align}
    \label{eq.theta.argmin}
        \wh \theta_{n}(x) \in \argmin_{\theta \in \RR^m}\sum_{i=1}^n\bigg[Y_i - \theta^TV\bigg(\frac{X_i - x}{h}\bigg)\bigg]^2\mathds{1}(X_i \in \B(x, h)).
    \end{align}
    In the case $\ell = 0,$ the above reduces to the Nadaraya-Watson estimator. The generalisation to $\ell > 1$ allows to exploit higher order smoothness of the unknown signal. More precisely, if the data generating process satisfies $Y_i = f(X_i)$ with $f$ sufficiently smooth, then, intuitively, solutions to the minimisation problem \eqref{eq.theta.argmin} should be close to the Taylor expansion of $f$ at $x.$ The choice of the kernel $K(u) = \mathds{1}(u \in \B(0, 1))$ simplifies the analysis. Following the same steps as in \cite[Section 1.6]{MR2724359}, we see that whenever the matrix
    \begin{align*}
        D_n(x) &:= \frac{1}{nh^d} \sum_{i=1}^n V\bigg(\frac{X_i - x}{h}\bigg)V^T\bigg(\frac{X_i - x}{h}\bigg)\mathds{1}(X_i \in \B(x, h))
    \end{align*}
    is invertible, the solution $\wh \theta_{n}(x)$ is unique. Its first coordinate $\wh \theta_{n, 1}(x)$ provides a linear estimator of the conditional expectation of $Y$ given $X = x$ and admits the following expression.
    \begin{align}
        \label{eq.def.theta.1}
        \wh \theta_{n, 1}(x) = \sum_{i=1}^n Y_i W_{n, i}(x),    
    \end{align}
    where the scalars $W_{n, i}(x)$ are called $\LP(\ell)$ weights and are expressed as
    \begin{align}
        \label{eq.def.weights}
        W_{n,i}(x) := \frac{1}{nh^d}V^T(0)D_n(x)^{-1}V\bigg(\frac{X_i - x}{h}\bigg)\mathds{1}\big(X_i \in \B(x, h)\big).
    \end{align}
    The linearity of $\wh \theta_{n, 1}$ in the $\{Y_i\}_{i=1}^n$ suggests to consider $\sum_{i=1}^n \mathds{1}(Y_i \leq t)W_{n, i}(x)$ as an estimator of $\Fc(t, x).$ However, since we consider a global loss function, the estimator must be well-defined on the whole support of $X,$ while $D_n(x)$ will likely fail to be invertible (almost) everywhere. To deal with this we introduce a thresholding step in the construction of the estimator. To motivate the approach, observe that by the law of large numbers, $D_n(x)$ is close to its expectation. Assuming that $\P_{\sX}$ admits a density $p$ and applying a change of variables leads to
    \begin{align}
        \label{eq.D}
        \begin{split}
        \E[D_n(x)] &= \frac 1{h^d}\int_{[x - h, x+h]^d}V^T\bigg(\frac{t - x}{h}\bigg)V\bigg(\frac{t - x}{h}\bigg)\, d\P_{\sX}(t)\\
        &= \int_{[-1, 1]^d}V^T(t)V(t)p(x + th)\, dt,
        \end{split}
    \end{align}
    If the density $p$ is bounded away from zero, then, using the fact that $[x - h, x+h]^d\cap[0, 1]^d$ has Lebesgue measure at least $h/2^d,$ we can relate the smallest eigenvalue of $\E[D_n(x)]$ to the smallest eigenvalue of
    \begin{align}
        D := \int_{[0, 1]^d}V(v)V^T(v)\, dv.
        \label{eq.D_def}
    \end{align}
    The smallest eigenvalue of $D$ is strictly positive since for $z\neq 0,$ $v\mapsto \|z^TV(v)\|_2^2$ is a non-zero polynomial and thus $z^TDz=\int_{[0, 1]^d} \|z^TV(v)\|_2^2 \, dv >0.$ Concentration arguments then show that the smallest eigenvalue $\lambda_1(D_n(x))$ of $D_n(x)$ is bounded away from zero on a large subset of $[0, 1]^d.$ The threshold $\lambda_1^{thr} >0$ allows us to control the size of this subset. This motivates the use of the estimator
    \begin{align}
        \label{eq.estimator}
        \wh \F_{\sY|\sX}(t|x) := \begin{dcases}
           \sum_{i=1}^n \mathds{1}(Y_i \leq t)W_{n,i}(x) & \text{if } \lambda_1(D_n(x)) \geq \lambda_1^{thr}, \text{ and}\\
            \mathds{1}(t \geq 0) &\text{otherwise.}
        \end{dcases}
    \end{align}
    Estimating the conditional cdf requires additional regularity assumptions, which we introduce in the following subsection.
    \subsection{Smoothness and tail assumptions}
    We consider a form of smoothness classes described in the following definition.
    \begin{definition}[Smoothness class]
        \label{def.smooth}
        Let $K > 0$ and $\beta = \ell + \gamma$ where $\ell$ is an integer and $\gamma \in (0, 1].$ We define the class $\mD(K, \beta)$ of conditional cumulative distribution functions $\Fc$ such that 
        \begin{enumerate}
            \item [$(i)$] for any $t,$ the partial derivatives $\partial^{\balpha} \Fc(t|x)$ exist for all $\balpha$ with $|\balpha|\leq \ell$ (recalling that derivatives are with respect to $x\mapsto \Fc(t|x)$), and 
            \item [$(ii)$] for any $|\balpha| = \ell$ and any $x, y \in [0, 1]^d,$
        \begin{align*}
            \int_{\RR} \big|\partial^{\balpha} \Fc(t|x) - \partial^{\balpha}\Fc(t|y)\big|\, dt \leq K\|x - y\|^\gamma.
        \end{align*}
        \end{enumerate}
    \end{definition}
    The smoothness property $(ii)$ of Definition \ref{def.smooth} adds an integral with respect to $t$ if compared to the more standard condition $|\partial^{\balpha}\Fc(t|x) - \partial^{\balpha}\Fc(t|y)| \leq K\|x - y\|^\gamma$ encountered in e.g.\ \cite{MR4047593} or \cite{padilla2022quantile}. Taking the square inside the integral in $(ii)$ leads to the assumption made in \cite{PIC20231564}. Their analysis is mentioned in more detail in Section \ref{sec.3}.
    In addition to the smoothness of the cdf, we impose two conditions on the distribution of $(X, Y).$
    \begin{assumption}
    \label{ass.1}
    \begin{enumerate}
        \item [$(i)$] There exists $C, \delta> 0$ such that the distribution of the pair $(X, Y)$ satisfies $\E[|Y|^{2 + \delta}|X = x] \leq C$ for all $x \in [0, 1]^d,$ and 
        \item [$(ii)$] the marginal distribution $\P_{\sX}$ admits a density $p$ that satisfies $\ul p \leq p(x) \leq \ol p$ \ for all $x \in [0, 1]^d,$ where $0 < \ul p \leq \ol p <\infty.$
    \end{enumerate}
    \end{assumption} 
    We denote by \[\mP=\mP(K, \beta, C, \delta, \ul p, \ol p)\] the class of pairs $(X, Y)$ that satisfy Assumption \ref{ass.1} with constant $C, \delta, \ul p, \ol p,$ and whose conditional cdf belongs to $\mD(K, \beta).$
\section{Rates of convergence}
    \label{sec.3}
    In the first subsection, we discuss the upper bounds on the minimax risk induced by the $\mL$-loss over the aforementioned smoothness classes. The second subsection shows that our problem is reducible to nonparametric regression in $L^1$-norm, leading to a matching lower bound for the minimax risk. 
    \subsection{Upper bound}

    We are now ready to state our first result, where the estimator $\wh \F_{\sY|\sX}$ is used to derive an upper bound for the risk associated with $\mL$ over the class $\mP.$
    \begin{theorem}
        \label{th.main.theorem}
        Let $K, C, \delta > 0, \ 0 < \ul p \leq \ol p < \infty$ and $\beta = \ell + \gamma$ with integer $\ell$ and $\gamma \in (0, 1].$ Then, there exists a constant $C_0 > 0$ such that
        \begin{align*}
            \inf_{\wh \F_{\sY|\sX}} \  \sup_{(X, Y) \in \mP(K, \beta, C, \delta, \ul p, \ol p)}\E\bigg[\int_{\RR} \big|\Fc(t|X) - \wh \F_{\sY|\sX}(t|X)\big|\, dt\bigg] &\leq C_0n^{-\beta/(2\beta + d)},
        \end{align*}
        where the infimum is taken over all estimators and $C_0$ depends only on $K, \beta, C, \delta,$ $\ul p, \ol p, D$ and $d.$ In particular, this upper bound holds for the estimator \eqref{eq.estimator} with the choice $\lambda_1^{thr} = \ul p\lambda_1(D)/2,$ and $h = n^{-1/(2\beta + d)}.$
    \end{theorem}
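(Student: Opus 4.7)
The plan is to control the $L^1$-risk of the estimator \eqref{eq.estimator} through a bias--variance decomposition on the event $A(x) := \{\lambda_1(D_n(x)) \geq \lambda_1^{thr}\}$, coupled with a matrix concentration argument that makes the complement contribute only exponentially small corrections. A preliminary observation is that for every $x \in [0,1]^d$ the rescaled integration region $\{v \in [-1,1]^d : x + hv \in [0,1]^d\}$ contains a ``corner cube'' $\prod_{j=1}^d I_j$ with $I_j \in \{[-1,0], [0,1]\}$; a sign-flip change of variables $v_j \mapsto -v_j$ in the relevant components, combined with \eqref{eq.D_def}, shows that $\int_{\prod_j I_j} V(v)V^T(v)\,dv$ has the same spectrum as $D$. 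Hence $\lambda_1(\E[D_n(x)]) \geq \ul p\,\lambda_1(D)$ uniformly in $x$, and matrix Bernstein applied to the bounded, independent rank-one summands defining $D_n(x)$ yields $\P(A(x)^c) \leq C\exp(-c\,nh^d)$ uniformly. On $A(x)^c$ the estimator collapses to $\mathds{1}(t\geq 0),$ whose integrated error equals $\int |\Fc(t|x) - \mathds{1}(t\geq 0)|\,dt = \E[|Y|\mid X=x]$, uniformly bounded under Assumption \ref{ass.1}$(i)$; its total contribution is thus super-polynomially small once $h = n^{-1/(2\beta+d)}$ is chosen.

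On $A(x)$, intersected with the further high-probability event that $\#\{i : X_i \in \B(x,h)\} \lesssim nh^d,$ the weights obey the polynomial-reproduction identity $\sum_i P(X_i - x) W_{n,i}(x) = P(0)$ for every polynomial $P$ of degree at most $\ell$, together with $|W_{n,i}(x)| \leq C/(nh^d)$ and $\sum_i |W_{n,i}(x)| \leq C$. Decompose $\wh\F_{\sY|\sX}(t|x) - \Fc(t|x) = B(t,x) + V(t,x)$ with $B(t,x) := \sum_i \Fc(t|X_i) W_{n,i}(x) - \Fc(t|x)$ and $V(t,x) := \sum_i [\mathds{1}(Y_i \leq t) - \Fc(t|X_i)] W_{n,i}(x)$. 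For the bias, subtract the order-$\ell$ Taylor polynomial $P_{t,x}$ of $\Fc(t|\cdot)$ at $x$ (reproduced exactly) to rewrite $B(t,x) = \sum_i W_{n,i}(x)[\Fc(t|X_i) - P_{t,x}(X_i - x)]$; the integral form of the Taylor remainder together with Definition \ref{def.smooth}$(ii)$, after integrating in $t$, delivers $\int |B(t,x)|\,dt \leq K h^\beta \sum_i |W_{n,i}(x)| \lesssim h^\beta$.

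For the stochastic term, conditioning on $X_1,\dots,X_n$ makes the summands of $V(t,x)$ independent and centred with $\sigma_x^2(t) := \E[V(t,x)^2 \mid X_1,\dots,X_n] = \sum_i \Fc(t|X_i)(1-\Fc(t|X_i)) W_{n,i}(x)^2$, so Jensen's inequality gives $\E[|V(t,x)| \mid X_1,\dots,X_n] \leq \sigma_x(t)$. Using the identity $\int \Fc(t|X_i)(1-\Fc(t|X_i))\,dt = \tfrac12 \E[|Y-Y'|\mid X_i] \leq \E[|Y|\mid X_i] \lesssim 1$ together with $\sum_i W_{n,i}(x)^2 \lesssim (nh^d)^{-1}$ on $A(x)$ yields the $L^2(dt)$ control $\int \sigma_x^2(t)\,dt \lesssim (nh^d)^{-1}$. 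The main technical obstacle is converting this into an $L^1(dt)$ bound under an unbounded response: split at $|t| = T$, apply Cauchy--Schwarz on the truncated part to get $\int_{-T}^T \sigma_x(t)\,dt \leq \sqrt{2T/(nh^d)}$, and use Markov's inequality with the $(2+\delta)$-th conditional moment from Assumption \ref{ass.1}$(i)$ to obtain $\Fc(t|X_i)(1-\Fc(t|X_i)) \leq C|t|^{-(2+\delta)}$, hence $\sigma_x(t) \lesssim (nh^d)^{-1/2} |t|^{-(1+\delta/2)}$ and $\int_{|t|>T}\sigma_x(t)\,dt \lesssim (nh^d)^{-1/2}T^{-\delta/2}$. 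A constant choice of $T$ balances both contributions and delivers $\int \sigma_x(t)\,dt \lesssim (nh^d)^{-1/2}$.

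Collecting the bias, the variance and the negligible off-event contributions yields the pointwise-in-$x$ bound $\E\int |\Fc(t|X) - \wh\F_{\sY|\sX}(t|X)|\,dt \lesssim h^\beta + (nh^d)^{-1/2},$ and the choice $h = n^{-1/(2\beta+d)}$ balances the two leading terms to produce the claimed rate $n^{-\beta/(2\beta+d)}$. The hardest step is the $L^2$-to-$L^1$ variance conversion over the unbounded range of $t$: it is precisely there that both the integrated form of the Hölder condition in Definition \ref{def.smooth} and the $(2+\delta)$-th moment assumption in Assumption \ref{ass.1}$(i)$ play essential roles, and the uniform lower bound on $\lambda_1(\E[D_n(x)])$ obtained via the corner-cube argument is what avoids a spurious boundary layer that would otherwise degrade the rate when $\beta > 1$.
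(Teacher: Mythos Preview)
Your proposal is correct and follows the same bias--variance--complement decomposition as the paper, with the same key ingredients: polynomial reproduction of the $\LP(\ell)$ weights for the bias, the Bernoulli conditional variance combined with a $(2+\delta)$-moment tail split for the stochastic term, and a matrix Bernstein inequality for the eigenvalue concentration controlling $A(x)^c$. The only differences are cosmetic---your corner-cube eigenvalue bound and your use of the integral (rather than Lagrange) Taylor remainder are in fact slightly cleaner than the paper's versions---and your closing remark that the integrated H\"older condition enters the $L^2$-to-$L^1$ variance conversion is a harmless slip, since that condition is used only in the bias step.
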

    In the proof of Theorem \ref{th.main.theorem}, condition $(i)$ of Assumption \ref{ass.1} is sufficient to control the variance of $\wh \F_{\sY|\sX}.$ Smoothness gives control over the bias term. Assumption \ref{ass.1} $(ii)$ is used throughout the proof and could potentially be weakened. This assumption is nonetheless standard in nonparametric regression. Theorem \ref{th.main.theorem} shows that our setting leads to generative rates of the same order as in nonparametric regression. We show in the next section that those rates are actually minimax optimal.\\ 

    The analysis in this article shares similarities with several other works. Up to the thresholding, the estimator \eqref{eq.estimator} seems to have first appeared in \cite{hall1999methods} in the case $\ell \in \{0, 1\}$ and for general kernel function. Slight variations of the latter have been used for quantile regression, where the estimator usually undergoes a specific inversion scheme to approximate the quantile function. This avoids the commonly known crossing quantile curves phenomenon, where a quantile regressor might not be monotone in $\tau.$ See, e.g., \cite{yu1998local} for $d = \ell = 1$ or \cite{dette2008non} for $d = 1$ and $\ell \in \{0, 1\}$ as well as the references therein. In particular, the results of \cite{yu1998local, dette2008non} concern asymptotic normality and pointwise mean squared error at fixed level $\tau.$ This is incomparable with the global convergence rates obtained in our analysis. Additionally, the crossing quantile curves phenomenon is an issue for inference but does not impact generative tasks as long as the generated variable mimics the original variable well enough.  More recently, \cite{AOS2320} uses \eqref{eq.estimator} with $\ell = 0$ to estimate fixed quantiles of heavy-tailed conditional distributions. Departing from the quantile regression setting, global convergence rates for thresholded local polynomial estimators seem to have been achieved for the first time in \cite{chhor2024benign} for interpolating regressors and the $L^2$ risk. The very recent and closely related work \cite{PIC20231564} analyses \eqref{eq.estimator} for $\ell = 0$ in the context of distributional regression. They consider the risk $\E\big[\int (\Fc(t|X) - \wh \F_{\sY|\sX}(t|X))^2\, dt\big]$ and derive minimax rates with respect to the class $\mD(K, \beta)$ with $\beta \in (0, 1],$ and as mentioned below Definition \ref{def.smooth}, the smoothness assumption $(ii)$ in Definition \ref{def.smooth} replaced by
    \begin{align*}
        \int_{\RR} \big(\Fc(t|x) - \Fc(t|y)\big)^2\, dt \leq K\|x - y\|^{2\beta}.
    \end{align*}
    We stress that extending their analysis to the case $\beta > 1$ can be obtained from the arguments used in our upper and lower bounds.\\
    \subsection{Lower bound}
    Let $f_0 \colon x \mapsto \E[Y|X = x]$ denote the true conditional mean. From a generative procedure $\wh Y$ one can obtain the estimator $\wh f \colon x \mapsto \E[\wh Y|X = x, S_n],$ where, as before, $S_n$ denotes the sample. Applying Jensen's inequality to the definition of $\mL$ leads to the reduction
    \begin{align}
    \begin{split}
        \label{eq.reduction}
        \E[\mL(\wh Y, Y)] &= \E\big[|\wh Y - Y|\big]\\
        &\geq \E\big[\big|\E[\wh Y|X, S_n] - \E[Y|X]\big|\big]\\
        &= \E\big[\| f_0 - \wh f\|_{L^1(\P_{\sX})}\big].
    \end{split}
    \end{align}
    Hence, the generative problem in $\mL$ loss is at least as hard as a regression problem in $L^1(\P_{\sX})$ loss. The exact setting of the corresponding regression problem is, however, unknown as the definition of the class $\mD(L, \beta)$ does not yield immediate information on the smoothness of $f_0$ and the behaviour of the residuals $Y - f_0(X).$ We now relate the two problems and obtain a lower bound that matches the upper bound obtained in Theorem \ref{th.main.theorem}.\\
    The upper bound is achieved over all distributions of $(X, Y)$ satisfying Assumption \ref{ass.1} and Definition \ref{def.smooth}. For the lower bound, it is sufficient to consider a submodel. Consider nonparametric regression with Gaussian noise $\eps \sim \mN(0, \sigma^2)$ that is independent of $X$ and assume there exists a function $f_0$ such that the data generating process satisfies
    \begin{align}
        \label{eq.regression.model}
        Y = f_0(X) + \eps.
    \end{align}
    Then, $f_0(x)=\E[Y|X = x]$ and thanks to \eqref{eq.reduction}, we obtain a lower bound for the generative risk if we can derive a lower bound for estimating the underlying regression function $f_0.$ 

    While the upper bound is taken over the class $\mD(K, \beta)$ of conditional cumulative distribution functions $\Fc$ introduced in Definition \ref{def.smooth}, 
    we will now show that for the lower bounds it is enough to assume that $f_0$ lies in a H\"older class
    \begin{align*}
        \mH(L, \beta) := \Bigg\{f \colon \RR^d \to \RR: \sum_{0 \leq |\balpha| \leq \ell-1} \big\|\partial^{\balpha}f\|_{\infty} + \sup_{\substack{ |\balpha| = \ell \\ x \neq y}}\frac{\big|\partial^{\balpha}f(x) - \partial^{\balpha} f(y)\big|}{\|x - y\|^{\gamma}} \leq L\Bigg\}.
    \end{align*}
    Compared to the generative setting of Section \ref{sec.2}, regression only requires estimating the conditional mean of $Y.$ Model \eqref{eq.regression.model} corresponds to a particular case of Model \eqref{eq.generative.model}, where the distribution of $Y|X$ is known up to its mean. Indeed, in the regression model, the conditional cdf of $Y$ is given by 
    \begin{align*}
        \Fc(t|x) = \Phi\bigg(\frac{t - f_0(x)}{\sigma}\bigg),
    \end{align*}
    where $\Phi$ denotes cdf of $\mN(0, 1).$  Therefore, estimating $f_0$ can be achieved by considering Model \eqref{eq.generative.model} over the class 
    \begin{align*}
        \mD'(L, \beta, \sigma) := \bigg\{\F\colon (x, t) \mapsto \Phi\bigg(\frac{t - f(x)}{\sigma}\bigg) \ \text{ for } f \in \mH(L, \beta)\bigg\}.
    \end{align*}
    At this point, taking the supremum over the class $\mH(L, \beta)$ and the infimum over all estimators in \eqref{eq.reduction} shows that
    \begin{align}
    \label{eq.reduction2}
        \inf_{\wh \F_{\sY|\sX}}\sup_{\Fc \in \mD'(L, \beta, \sigma)} \E\big[\mL(\wh Y, Y)\big] &\geq \inf_{\wh f}\sup_{f_0 \in \mH(L, \beta)}\E\big[\|\wh f - f_0\|_{L^1(\P_{\sX})}\big].
    \end{align}
    where both infimas are taken over all estimators, and the right-hand side assumes Model \eqref{eq.regression.model} with noise variance $\sigma^2.$ The next result relates the smoothness classes $\mD'(L, \beta)$ and $\mD(K, \beta).$
    \begin{proposition}
        \label{prop.smoothness.is.satisfied}
        If $\beta, \sigma >0,$ then for all $K > 0$ there exists $L = L(K, \beta, \sigma) > 0$ such that
        \begin{align*}
            \mD'(L, \beta, \sigma) \subseteq \mD(K, \beta),
        \end{align*}
    \end{proposition}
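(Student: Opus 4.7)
The plan is to write $\Fc(t|x) = g(t - f(x))$ with $g(u) := \Phi(u/\sigma)$. Since $g$ is $C^\infty(\RR)$ and every $f \in \mH(L,\beta)$ has continuous partial derivatives up to order $\ell$ on $[0,1]^d$, condition $(i)$ of Definition~\ref{def.smooth} follows at once from the chain rule, so only the $L^1$-H\"older estimate in $(ii)$ for full-order multi-indices $|\balpha| = \ell$ remains to be verified.

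The case $\ell = 0$ is handled directly. The $L^1$-distance of two cdfs on $\RR$ equals the corresponding Wasserstein-$1$ distance (the identity already used in Section~\ref{sec.2}), which for the location family $\mN(f(x),\sigma^2)$ and $\mN(f(y),\sigma^2)$ equals $|f(x) - f(y)|$. The H\"older assumption on $f$ then yields $\int_\RR |\Fc(t|x) - \Fc(t|y)|\,dt \leq L\|x-y\|^\gamma$, so any $L \leq K$ works.

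For $\ell \geq 1$, I would apply the multivariate Fa\`{a} di Bruno formula to obtain
\[
\partial^{\balpha}\Fc(t|x) = \sum_{\pi} c_\pi\, (-1)^{|\pi|}\, g^{(|\pi|)}(t - f(x)) \prod_{B \in \pi} \partial^{\balpha_B}f(x),
\]
as a finite sum over partitions $\pi$ of $\balpha$ into blocks with $|\balpha_B| \geq 1$ and $1 \leq |\pi| \leq \ell$. Writing $k = |\pi|$ and $P_\pi(x) = \prod_{B} \partial^{\balpha_B}f(x)$, for each partition the difference $g^{(k)}(t-f(x))P_\pi(x) - g^{(k)}(t-f(y))P_\pi(y)$ is split as
\[
[g^{(k)}(t-f(x)) - g^{(k)}(t-f(y))]\,P_\pi(x) + g^{(k)}(t-f(y))\,[P_\pi(x) - P_\pi(y)].
\]
The first piece, integrated in $t$, is bounded by the $L^1$-shift inequality $\int|g^{(k)}(t-a) - g^{(k)}(t-b)|\,dt \leq |a-b|\,\|g^{(k+1)}\|_{L^1}$, and the second by $\|g^{(k)}\|_{L^1}$; both norms are finite and depend only on $k$ and $\sigma$ because each $g^{(j)}$ with $j \geq 1$ is a Hermite polynomial times $\varphi(\cdot/\sigma)$.

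The factor $P_\pi(x) - P_\pi(y)$ is then expanded telescopically over its $k$ factors: each single-factor difference is either $\leq L\|x-y\|^\gamma$ directly (when $|\balpha_B| = \ell$, by the H\"older control from $\mH(L,\beta)$) or dominated by $L_2\|x-y\|$ for $|\balpha_B| \leq \ell - 1$. This Lipschitz estimate, together with a uniform bound $\|\partial^{\balpha}f\|_\infty \leq L_1$ on $[0,1]^d$ for every $|\balpha| \leq \ell$, is the single auxiliary lemma needed, and I would prove both by axis-parallel slicing: on any segment of length one in $[0,1]^d$, rewriting $\psi(1) - \psi(0) = \psi'(t_0) + \int_0^1 (\psi'(t) - \psi'(t_0))\,dt$ and combining $\|\psi\|_\infty \leq L$ with the $\gamma$-H\"older control of $\psi'$ gives $\|\partial^{\balpha}f\|_\infty \leq 3L$ for $|\balpha| = \ell$, from which Lipschitz control of every strictly lower-order derivative follows. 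Since $\|x-y\| \leq 1$ and $\gamma \leq 1$ on $[0,1]^d$, every Lipschitz factor is dominated by a $\gamma$-H\"older one; summing the finitely many contributions produces a bound $C(L,\beta,\sigma,d)\|x-y\|^\gamma$ in which $C(L)$ is a polynomial in $L$ with no constant term. Thus $C(L) \to 0$ as $L \to 0$, and choosing $L = L(K,\beta,\sigma)$ small enough gives $C(L) \leq K$. The chief obstacle is not any single analytic step but the combinatorial bookkeeping required by Fa\`{a} di Bruno's formula together with the two-level telescoping.
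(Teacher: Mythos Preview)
Your proposal is correct and follows essentially the same route as the paper: Fa\`a di Bruno for $\ell\geq 1$, the product-splitting identity $a_1b_1-a_2b_2=(a_1-a_2)b_1+a_2(b_1-b_2)$, a telescoping bound on the product of derivatives, and the observation that the final constant is a polynomial in $L$ with no constant term, so it can be made $\leq K$ by taking $L$ small. The tactical choices differ slightly. For $\ell=0$ you invoke the Wasserstein identity, whereas the paper applies the mean value theorem to $\Phi$ directly; for the shift term you use the clean $L^1$-translation bound $\int|g^{(k)}(\cdot-a)-g^{(k)}(\cdot-b)|\,dt\leq|a-b|\,\|g^{(k+1)}\|_{L^1}$, while the paper instead factors out $p_\sigma(t,f(x))$ via Hermite polynomials and controls $|p_\sigma(t,f(x))-p_\sigma(t,f(y))|$ pointwise by a Lipschitz argument on $\mu\mapsto e^{(2t\mu-\mu^2)/(2\sigma^2)}$. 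You are also more careful than the paper in one respect: the class $\mH(L,\beta)$ as defined bounds $\|\partial^{\balpha}f\|_\infty$ only for $|\balpha|\leq\ell-1$, so the sup-norm control of the top-order derivatives genuinely needs your axis-parallel argument, which the paper's application of its product lemma leaves implicit.
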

    Applying Proposition \ref{prop.smoothness.is.satisfied} to \eqref{eq.reduction2} shows the following general lower bound.
    \begin{theorem}
    \label{th.general.lower.bound}
        If $X \sim \P_{\sX}$ is supported on $[0, 1]^d,$ then, for all $K, \beta> 0,$ there exists $L, \sigma > 0$ such that
        \begin{align*}
            \inf_{\wh \F_{\sY|\sX}} \ \sup_{\Fc \in \mD(K, \beta)}\E\bigg[\int_{\RR} \big|\Fc(t|X) - \wh \F_{\sY|\sX}&(t|X)\big|\, dt\bigg]\\
            &\geq \inf_{\wh f}\sup_{f_0 \in \mH(L, \beta)}\E\Big[\|\wh f - f_0\|_{L^1(\P_{\sX})}\Big],
        \end{align*}
        where both infima are taken over all estimators, and the right-hand side assumes Model \eqref{eq.regression.model} with noise variance $\sigma^2.$
    \end{theorem}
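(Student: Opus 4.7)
The plan is to assemble Theorem \ref{th.general.lower.bound} by combining Proposition \ref{prop.smoothness.is.satisfied} with the Jensen-based reduction \eqref{eq.reduction}, taking some care about the order of infima and suprema. No heavy analysis is needed: the smoothness transfer from the regression function to the conditional cdf is already encapsulated in Proposition \ref{prop.smoothness.is.satisfied}.

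First I would, for given $K, \beta > 0$, fix an arbitrary $\sigma > 0$ (say $\sigma = 1$) and invoke Proposition \ref{prop.smoothness.is.satisfied} to obtain $L = L(K, \beta, \sigma) > 0$ with $\mD'(L, \beta, \sigma) \subseteq \mD(K, \beta)$. Enlarging the class under the supremum can only increase the minimax risk, so the left-hand side of the claimed inequality is at least
\[
\inf_{\wh \F_{\sY|\sX}} \sup_{\Fc \in \mD'(L, \beta, \sigma)} \E\bigg[\int_{\RR} \big|\Fc(t|X) - \wh \F_{\sY|\sX}(t|X)\big|\, dt\bigg].
\]
On the submodel $\mD'(L, \beta, \sigma)$, the data automatically obey \eqref{eq.regression.model} with $\eps \sim \mN(0, \sigma^2)$ independent of $X$ and some $f_0 \in \mH(L, \beta)$, so parametrising $\Fc$ by $f_0$ turns the supremum into $\sup_{f_0 \in \mH(L, \beta)}$.

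Next I would transfer the generative risk into a regression risk exactly as in \eqref{eq.reduction}. Given any $\wh \F_{\sY|\sX}$, set $\wh Y := \wh \F^{-1}_{\sY|\sX}(U|X)$ with $U \sim \mU$ independent of $(X, S_n)$, and let $\wh f(x) := \E[\wh Y \mid X = x, S_n]$. Since the $L^1(\mU \otimes \P_\sX)$ representation of $\mL$ identifies the generative risk with $\E[\mL(\wh Y, Y)]$, Jensen's inequality gives, pointwise in $f_0 \in \mH(L, \beta)$,
\[
\E\bigg[\int_{\RR} \big|\Fc(t|X) - \wh \F_{\sY|\sX}(t|X)\big|\, dt\bigg] \;=\; \E\big[\mL(\wh Y, Y)\big] \;\geq\; \E\big[\|f_0 - \wh f\|_{L^1(\P_{\sX})}\big].
\]
Crucially, $\wh f$ is a measurable function of $S_n$ alone and hence a bona fide regression estimator.

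Finally, for each fixed $\wh \F_{\sY|\sX}$ I would take $\sup_{f_0 \in \mH(L, \beta)}$ on both sides, and then bound the right-hand side from below by $\inf_{\wh g} \sup_{f_0 \in \mH(L, \beta)} \E[\|f_0 - \wh g\|_{L^1(\P_\sX)}]$, using that $\wh f$ is one particular choice of regression estimator. Taking the infimum over $\wh \F_{\sY|\sX}$ on the left then delivers the theorem. The only point that requires mild care is the \emph{order} of the sup/inf operations: one must apply $\inf_{\wh g}$ \emph{after} taking $\sup_{f_0}$ of a specific $\wh f$, since the reverse order would reverse the inequality. This is the only conceptual subtlety; the rest of the argument is bookkeeping.
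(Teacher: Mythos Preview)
Your proposal is correct and follows essentially the same route as the paper: reduce to the submodel $\mD'(L,\beta,\sigma)$ via Proposition \ref{prop.smoothness.is.satisfied}, then apply the Jensen-based reduction \eqref{eq.reduction} and take sup/inf in the right order, exactly as in \eqref{eq.reduction2}. The only cosmetic difference is that the paper first derives \eqref{eq.reduction2} over $\mD'$ and then enlarges to $\mD$, whereas you first enlarge and then reduce; the content is identical.
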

    We now additionally assume that the marginal distribution $\P_{\sX}$ satisfies Assumption \ref{ass.1} $(ii),$ that is, its density $p$ satisfies $\ul p \leq p(x) \leq \ol p$ for all $x$ in $[0, 1]^d.$ The minimax risk of estimating $f_0\in \mH(L, \beta)$ in $L^1(\P_{\sX})$ norm is then shown to be lower bounded by a multiple of $n^{-\beta/(2\beta + d)}$ in Proposition \ref{prop.lower.bounds.regression} in the appendix. In the previous theorem, the supremum on the left-hand side is taken with respect to the class $\mD(K, \beta),$ whereas the upper bound in Theorem \ref{th.main.theorem} is derived for the smaller class $\mP=\mP(K, \beta, C, \delta, \ul p, \ol p).$ To complete the derivation of the lower bound, requires checking that a careful choice of $L, \sigma$ leads to Assumption \ref{ass.1} $(i)$ being satisfied for all pairs $(X, Y)$ distributed according to Model \eqref{eq.regression.model}. This last step follows from the following observation: If $Y = f_0(X) + \eps,$ then, for all $\delta > 0,$ it holds that 
    \begin{align*}
        \sup_{x \in [0, 1]^d}\E[|Y|^{2 + \delta}|X = x] &\leq 2^{1 + \delta}\big(L^{2 + \delta} + \sigma^{2 + \delta}\E[|\eps_0|^{2 + \delta}]\big),
    \end{align*}
    where $\eps_0 \sim \mN(0, 1).$ Consequently, for all $C, \delta > 0,$ one can pick $\sigma = \sigma(C, \delta)$ satisfying $\sigma \leq (C/(2^{2 + \delta}\E[|\eps_0|^{2 + \delta}]))^{1/(2 + \delta)}.$ Further, making use of Proposition \ref{prop.smoothness.is.satisfied}, one can pick $L = L(K, \beta, C, \delta, \sigma) > 0$ small enough so that on the one hand, $L^{2 + \delta} \leq C/2^{2 + \delta}$ and, on the other hand, $\mD'(L, \beta, \sigma) \subseteq \mD(K, \beta).$ This ensures that $(X, Y) \in \mP(K, \beta, C, \delta, \ul p, \ol p)$ and immediately proves our last result.
    \begin{theorem}
        \label{th.lower.bound.generative}
        Work under the assumptions of Theorem \ref{th.main.theorem}. Then there exists a strictly positive constant $c = c(K, \beta, \ul p, \ol p, d, \delta, C)$ such that
    \begin{align*}
        \inf_{\wh \F_{\sY|\sX}}\sup_{(X, Y) \in \mP(K, \beta, C, \delta, \ul p, \ol p)}\E\bigg[\int_{\RR} \big|\Fc(t|X) - \wh \F_{\sY|\sX}(t|X)\big|\, dt\bigg] \geq cn^{-\beta/(2\beta + d)},
    \end{align*}
    where the infimum is taken over all estimators.
    \end{theorem}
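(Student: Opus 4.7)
The plan is to carry out the three-step reduction already sketched in the paragraph preceding the statement: restrict to the Gaussian regression submodel of Section \ref{sec.3}, invoke Theorem \ref{th.general.lower.bound} to pass to nonparametric regression over a H\"older ball, and then apply a standard $L^1(\P_{\sX})$ nonparametric lower bound. The only delicate point is to choose the parameters of the regression submodel so that every pair $(X, Y)$ it generates actually belongs to the smaller class $\mP(K, \beta, C, \delta, \ul p, \ol p)$ over which the minimax risk is evaluated.

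To that end I would first fix $\sigma > 0$ small enough that $\sigma^{2+\delta}\E[|\eps_0|^{2+\delta}] \leq C/2^{2+\delta}$ for $\eps_0 \sim \mN(0, 1)$, then use Proposition \ref{prop.smoothness.is.satisfied} to pick $L = L(K, \beta, \sigma) > 0$ with $\mD'(L, \beta, \sigma) \subseteq \mD(K, \beta)$, shrinking $L$ further if necessary so that in addition $L^{2+\delta} \leq C/2^{2+\delta}$. With these choices, the elementary inequality $|a + b|^{2+\delta} \leq 2^{1+\delta}(|a|^{2+\delta} + |b|^{2+\delta})$ applied to $Y = f_0(X) + \eps$ gives
\begin{align*}
    \sup_{x \in [0, 1]^d} \E\big[|Y|^{2+\delta}\, \big|\, X = x\big] \leq 2^{1+\delta}\big(L^{2+\delta} + \sigma^{2+\delta}\E[|\eps_0|^{2+\delta}]\big) \leq C,
\end{align*}
so Assumption \ref{ass.1}(i) is verified, while Assumption \ref{ass.1}(ii) is part of the hypotheses of Theorem \ref{th.main.theorem}. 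Hence the entire regression submodel indexed by $f_0 \in \mH(L, \beta)$ sits inside $\mP$.

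Combining this inclusion with the chain \eqref{eq.reduction}--\eqref{eq.reduction2} and Theorem \ref{th.general.lower.bound} yields
\begin{align*}
    \inf_{\wh \F_{\sY|\sX}} \sup_{(X, Y) \in \mP} \E\bigg[\int_{\RR}\big|\Fc(t|X) - \wh \F_{\sY|\sX}(t|X)\big|\, dt\bigg] \geq \inf_{\wh f} \sup_{f_0 \in \mH(L, \beta)} \E\big[\|\wh f - f_0\|_{L^1(\P_{\sX})}\big],
\end{align*}
the right-hand side being the minimax $L^1(\P_{\sX})$ risk in Model \eqref{eq.regression.model} with Gaussian noise of variance $\sigma^2$. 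To close the argument I would invoke Proposition \ref{prop.lower.bounds.regression} of the appendix, which gives the classical nonparametric rate $c\, n^{-\beta/(2\beta+d)}$ uniformly over $f_0 \in \mH(L, \beta)$ under the density bound $p \geq \ul p$.

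The genuine work is therefore isolated in that appendix proposition; the expected argument there is the standard Assouad-cube construction using a smooth bump supported in a cube of side $h$, translated to the centres of a disjoint $h$-grid and signed by a hypercube of hypotheses, with Kullback--Leibler divergence between Gaussian regression laws of order $n h^{2\beta + d}$ and the choice $h \asymp n^{-1/(2\beta + d)}$ producing the rate; the lower bound $p \geq \ul p$ only enters to convert Lebesgue $L^1$ distance into $L^1(\P_{\sX})$ distance. From the perspective of Theorem \ref{th.lower.bound.generative} itself there is no real obstacle beyond careful bookkeeping of $\sigma$ and $L$, and the assembly of the three quoted results in the correct order.
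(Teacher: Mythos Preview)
Your proposal is correct and follows essentially the same route as the paper: choose $\sigma$ and then $L$ small enough so that the Gaussian regression submodel satisfies both Assumption \ref{ass.1}(i) (via the moment bound $2^{1+\delta}(L^{2+\delta}+\sigma^{2+\delta}\E[|\eps_0|^{2+\delta}])\leq C$) and the smoothness inclusion $\mD'(L,\beta,\sigma)\subseteq\mD(K,\beta)$ from Proposition \ref{prop.smoothness.is.satisfied}, then apply the reduction \eqref{eq.reduction}--\eqref{eq.reduction2} and Proposition \ref{prop.lower.bounds.regression}. Your sketch of the Assouad argument behind that proposition also matches the appendix proof.
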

    This provides a matching lower bound to Theorem \ref{th.main.theorem}, shows that the estimator \eqref{eq.estimator} is minimax rate optimal.
\section{Conclusion}
    \label{sec.discussion}
    We have shown how the problem of conditional generative modelling is reduced to quantile regression for univariate $Y$ and have derived the corresponding minimax-optimal rates of convergence under smoothness assumptions. A natural future direction is to consider the case where $Y$ is multivariate. In general dimension and under mild conditions, the Knothe-Rosenblatt map generalizes the quantile transform. Nevertheless, it is hard to predict whether the Knothe-Rosenblatt map leads to minimax optimal rates and which assumptions on the generative procedure need to be imposed.

\appendix
\section{Proofs}\label{appn} 

        \noindent\textbf{Additional notation.} For $x_1, \dots, x_n \in \RR^d,$ we write $x^n := (x_1, \dots, x_n).$ This is a $n\times d$ matrix. Similarly, for a collection of $\RR^d$-valued random variables $X_1, \dots, X_n,$ we set $X^n := (X_1, \dots, X_n).$ The $m$-hypersphere in $\ell_2$ norm is denoted by $S^m.$ The operator norm associated to the Euclidean norm is denoted by $\|\cdot\|_{op},$ that is, $\|A\|_{op} = \sup_{\|x\| = 1}\|Ax\|.$ The binomial distribution with $n$ trials and success probability $p$ is denoted by $\Bin(n, p).$ Finally, $\mB(\RR^p)$ denotes the Borel sigma-algebra generated by the open subsets of $\RR^p.$

        \subsection{Proof of the upper bound}
         Before proving our upper bound, we state an extension of \cite[Proposition 1.12]{MR2724359} to the multivariate case $d \geq 1.$
        \begin{lemma}
            \label{lem.LP.weights}
            Let $x, x_1, \dots, x_n \in \RR^{d}.$ If the matrix $D_n(x)$ is invertible and $W_{n, i}$ denote the  $\LP(\ell)$ weights defined in \eqref{eq.def.weights}, then, for any $\ell$-th degree polynomial $q(x) = \sum_{0 \leq |\balpha| \leq \ell} A_{\balpha}x^{\balpha},$ with real coefficients $A_{\balpha},$ it holds that
            \begin{align*}
                \sum_{i=1}^n q(x_i)W_{n, i}(x) = q(x).
            \end{align*}
            In particular, if $x$ is a root of $q,$ then $\sum_{i=1}^n q(x_i)W_{n, i}(x) = 0.$
        \end{lemma}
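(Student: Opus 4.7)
The plan is to show the reproducing property by parametrising a polynomial of degree at most $\ell$ through the basis vector $V$, and then use the defining algebraic identity relating $D_n(x)$ to the weights $W_{n,i}(x)$.

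First I would expand $q$ in a Taylor series around $x$. Since $q$ is a polynomial of degree at most $\ell$, its Taylor expansion terminates:
\begin{align*}
    q(y) = \sum_{|\balpha|\leq \ell} \frac{(y-x)^{\balpha}}{\balpha!}\,\partial^{\balpha}q(x)
    = \sum_{|\balpha|\leq \ell} h^{|\balpha|}\partial^{\balpha}q(x)\cdot \frac{((y-x)/h)^{\balpha}}{\balpha!}.
\end{align*}
Setting $\theta := \bigl(h^{|\balpha|}\partial^{\balpha}q(x)\bigr)_{0\leq|\balpha|\leq \ell}\in\RR^m$ (with the lexicographic ordering used throughout), this rewrites as $q(y)=\theta^T V\bigl((y-x)/h\bigr)$. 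Since $V(0)$ is the indicator of the $\balpha=\mathbf{0}$ coordinate, $V^T(0)\theta=\theta_{\mathbf 0}=q(x)$.

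Next I would plug this identity into $\sum_{i=1}^n q(x_i)W_{n,i}(x)$. Because $W_{n,i}(x)$ carries the indicator $\mathds{1}(x_i\in\B(x,h))$, only indices with $x_i\in\B(x,h)$ contribute, and for those $q(x_i)=\theta^T V((x_i-x)/h)$. Writing $v_i:=V((x_i-x)/h)\mathds{1}(x_i\in\B(x,h))$ and using the definition \eqref{eq.def.weights} of the weights:
\begin{align*}
    \sum_{i=1}^n q(x_i)W_{n,i}(x)
    = V^T(0)D_n(x)^{-1}\cdot\frac{1}{nh^d}\sum_{i=1}^n v_i v_i^T\,\theta,
\end{align*}
where I have used that $\theta^T v_i$ is scalar, so $(\theta^T v_i)v_i=v_iv_i^T\theta$. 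By the very definition of $D_n(x)$ we have $\frac{1}{nh^d}\sum_i v_iv_i^T=D_n(x)$, and the factors $D_n(x)^{-1}$ and $D_n(x)$ cancel (by the invertibility assumption), yielding $V^T(0)\theta=q(x)$.

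The second claim is immediate from the first: if $x$ is a root of $q$, then $q(x)=0$, so the sum vanishes. There is no real obstacle here beyond keeping the multi-index bookkeeping straight and paying attention to the role of the indicators $\mathds{1}(x_i\in\B(x,h))$ both in the definition of $W_{n,i}$ and in the matrix $D_n(x)$; this is what guarantees that the product $D_n(x)\theta$ on the right is exactly the sum one obtains from the $v_iv_i^T\theta$ terms, so that no boundary corrections appear.
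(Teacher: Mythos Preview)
Your proof is correct. Both you and the paper begin with the same key observation: writing the polynomial via its exact Taylor expansion about $x$ yields $q(y)=\theta^T V((y-x)/h)$ with $\theta=\big(h^{|\balpha|}\partial^{\balpha}q(x)\big)_{|\balpha|\leq \ell}$, and the first entry of $\theta$ is $q(x)$. From there the arguments diverge. The paper takes a variational route: it substitutes $Y_i=q(x_i)$ into the defining least-squares problem \eqref{eq.theta.argmin}, rewrites the objective as $(\theta-\mathbf{q}(x))^T D_n(x)(\theta-\mathbf{q}(x))$, and uses positive definiteness of $D_n(x)$ to conclude that the unique minimizer is $\mathbf{q}(x)$, so that $\wh\theta_{n,1}(x)=q(x)$. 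You instead work directly with the closed-form weights \eqref{eq.def.weights}, recognize $\tfrac{1}{nh^d}\sum_i v_iv_i^T$ as $D_n(x)$, and cancel $D_n(x)^{-1}D_n(x)$. Your route is slightly more elementary in that it bypasses the optimization argument and the need to argue positive definiteness; the paper's route has the conceptual payoff of making explicit why the local polynomial estimator reproduces polynomials (it literally interpolates them in the weighted least-squares sense). Both rely in the same way on the indicator structure so that the $v_iv_i^T$ sum matches $D_n(x)$ exactly.
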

        \begin{proof}
            Let $h > 0.$ Because $q$ is a polynomial of degree $\ell,$ its derivatives of order $|\balpha| >\ell$ vanish. This, and Taylor's theorem applied to $q$ shows that for all $x, y \in \RR^d,$
            \begin{align*}
                q(y) &= \sum_{0 \leq |\balpha| \leq \ell} \frac{(y - x)^{\balpha}}{\balpha!}\partial^{\balpha}q(x)\\
                &= \sum_{0 \leq |\balpha| \leq \ell} \frac{(y - x)^{\balpha}}{h^{|\balpha|}\balpha!}h^{|\balpha|}\partial^{\balpha}q(x) = \mathbf{q}(x)^TV\bigg(\frac{y - x}{h}\bigg),
            \end{align*}
            where $\mathbf{q}(x) := (h^{|\balpha|}\partial^{\balpha}q(x))_{\smash{0 \leq |\balpha|\leq \ell}}$ and $V(x) = (x^{\balpha}/\balpha !)_{0\leq \balpha \leq \ell}.$
            Consider $\wh \theta_n(x)$ defined as in \eqref{eq.theta.argmin}. As mentioned in Section \ref{sec.2}, the local polynomial estimator is linear in the $Y_i,$ and the weights $W_{n, i}(x)$ do not depend on $Y_1, \dots, Y_n$ (as long as the matrix $D_n(x)$ is invertible). This means that the choice of $Y_i$ does not impact the properties of the $\LP(\ell)$ weights. Nevertheless, for the sake of this proof, we choose $Y_i = q(x_i)$ for all $1 \leq i \leq n.$ With this choice, the definition of $\wh \theta_n(x)$ reads
            \begin{align}
            \label{eq.previous.display}
                \begin{split}
                \wh \theta_n(x) &\in \argmin_{\theta \in \RR^m} \sum_{i=1}^n\bigg[q(x_i) - \theta^TV\bigg(\frac{x_i - x}{h}\bigg)\bigg]^2\mathds{1}(x_i \in \B(x, h))\\
                &= \argmin_{\theta \in \RR^m} \sum_{i=1}^n \bigg[\big(\mathbf{q}^T(x) - \theta^T\big)V\bigg(\frac{x_i - x}{h}\bigg)\bigg]^2\mathds{1}(x_i \in \B(x, h))\\
                &= \argmin_{\theta \in \RR^m} \big(\mathbf{q}(x) - \theta)^TD_n(x)\big(\mathbf{q}(x) - \theta\big),
                \end{split}
            \end{align}
            By definition, $D_n(x)$ is a Gram matrix. Hence, it is symmetric and positive semi-definite. By assumption, it is invertible, which means that $D_n(x)$ is positive definite. Hence, $(\mathbf{q}(x) - \theta)^TD_n(x)(\mathbf{q}(x) - \theta) \geq 0$ for all $\theta,$ with equality if and only if $\theta = \mathbf{q}(x).$ This shows that the $\argmin$ in \eqref{eq.previous.display} is a singleton and $\wh \theta_n(x) = \mathbf{q}(x).$

            In particular, the first coordinate $q(x)$ of $\mathbf{q}(x)$ is equal to the first coordinate $\wh \theta_{n, 1}(x)$ of $\wh \theta_n(x).$ Together with \eqref{eq.def.theta.1}, this leads to
            \begin{align*}
                q(x) = \sum_{i=1}^n q(x_i)W_{n, i}(x).
            \end{align*}
        \end{proof}
        Lemma \ref{lem.LP.weights} is particularly useful when dealing with Taylor polynomials. For a sufficiently smooth multivariate function $f\colon \RR^d \to \RR,$ its Taylor polynomial of degree $\ell \geq 0$ around $x$ is
        \begin{align*}
            y \mapsto \sum_{0 \leq |\balpha|\leq \ell} \frac{(y - x)^{\balpha}}{|\balpha|!}\partial^{\balpha}f(x).
        \end{align*}
        \begin{remark}[$\LP(\ell)$ weights and Taylor approximations]
        \label{rem.taylor.LP}
            For any $\ell$ times differentiable function $f \colon \RR^d \to \RR,$ if $q$ is the Taylor polynomial of degree $\ell$ of $f$ around $x$ and $R_{\ell}(f, x, x_i)$ is the remainder in the approximation of $f(x_i)$ by Taylor expansion of $f$ around $x,$ then
            \begin{align*}
                \sum_{i=1}^n f(x_i)W_{n, i}(x) &= \sum_{i=1}^n \Big[q(x_i) + R_{\ell}(f, x, x_i)\Big]W_{n, i}(x)\\
                &= q(x) + \sum_{i=1}^nR_\ell(f, x, x_i)W_{n, i}(x)\\
                &= f(x) + \sum_{i=1}^nR_\ell(f, x, x_i)W_{n, i}(x),
            \end{align*}
            where the second equality follows from Lemma \ref{lem.LP.weights}, and the last equality comes from the fact that $q(x) = f(x)$ by definition of the Taylor polynomial of $f$ around $x.$
        \end{remark}
        \begin{proof}[\textbf{\textit{Proof of Theorem \ref{th.main.theorem}}}]
            Recall that $\lambda_1^{thr} = \ul p \lambda_1(D)/2.$ Consider the random set $E_n = E_n(X_1, \dots, X_n) := \{x : \lambda_1(D_n(x)) \geq \lambda_1^{thr}\}.$  We denote $\mathds{1}_{E_n}(x) := \mathds{1}(x \in E_n).$ For $x \in E_n,$ we define
            \begin{align*}
                \ol \F_{\sY|\sX}(t|x) := \sum_{i=1}^n \Fc(t|X_i)W_{n, i}(x).
            \end{align*}
            We begin by decomposing the loss as follows
            \begin{align*}
                \E\bigg[\int_{\RR} \big|\Fc(t|X) - \wh \F_{\sY|\sX}(t|X)\big|\, dt\bigg] &\leq (a) + (b) + (c),
            \end{align*}
            where 
            \begin{align*}
                (a) &:= \E\bigg[\int_{\RR} \big|\Fc(t|X) - \ol \F_{\sY|\sX}(t|X)\big|\, dt\mathds{1}_{E_n}(X)\bigg],\\
                (b) &:= \E\bigg[\int_{\RR} \big|\ol \F_{\sY|\sX}(t|X) - \wh \F_{\sY|\sX}(t|X)\big|\, dt\mathds{1}_{E_n}(X)\bigg],\\
                (c) &:= \E\bigg[\mathds{1}(X \in E_n^c)\int_{\RR} \big|\Fc(t|x) - \mathds{1}(t \geq 0)\big|\, dt\bigg].
            \end{align*}
            We now proceed by bounding each term separately.\\
            \textbf{\underline{Bound for $(a)$}.} Set $(a^*) := \int|\Fc(t|x) - \sum_{i=1}^n W_{n,i}(x)\Fc(t|X_i)|\, dt$ and consider an arbitrary $x \in E_n.$ Applying for each $i$ Taylor's theorem with remainder in Lagrange's form shows that
            \begin{align*}
                (a^*) &= \int_{\RR}\Big|\sum_{i=1}^n W_{n, i}(x)\big(\Fc(t|x) - \Fc(t|X_i)\big)\Big|\, dt\\
                &= \int_{\RR} \Big|\sum_{i=1}^n \sum_{1 \leq |\balpha| \leq \ell-1} \frac{(X_i - x)^{\balpha}}{\balpha!}\partial^{\balpha}\Fc(t|x)W_{n,i}(x)\\
                &\quad + \sum_{i=1}^n\sum_{|\balpha| = \ell}\frac{(X_i - x)^{\balpha}}{\balpha !}\partial^{\balpha}\Fc(t|\xi_i)W_{n,i}(x)\Big|\, dt,
            \end{align*}
            where the $\xi_i$ lie in $\B(x, \|X_i - x\|)\subset \B(x, h)$ for all $i \in \{1, \dots, n\}.$ Since $x \in E_n,$ and the $W_{n, i}$ are $\LP(\ell)$ weights, we can apply Lemma \ref{lem.LP.weights} and Remark \ref{rem.taylor.LP} (with $f = \Fc(t|\cdot)$ and $q$ the $\ell$-th degree Taylor polynomial associated to $\Fc(t|\cdot)$ around $x$) to obtain,
            \begin{align*}
                \sum_{i = 1}^n \sum_{1 \leq |\balpha| \leq \ell} \frac{(X_i - x)^{\balpha}}{\balpha !}\partial^{\balpha}\Fc(t|x)W_{n,i}(x) &= \sum_{1 \leq |\balpha| \leq \ell} \frac{(x - x)^{\balpha}}{\balpha !}\partial^{\balpha}\Fc(t|x)=0.
            \end{align*}
            Subtracting the terms corresponding to $|\balpha| = \ell$ leads to
            \begin{align*}
                \sum_{i = 1}^n \sum_{1 \leq |\balpha| \leq \ell-1} \frac{(X_i - x)^{\balpha}}{\balpha !}\partial^{\balpha}\Fc(t|x)&W_{n,i}(x)\\ 
                &= - \sum_{|\balpha| = \ell}\frac{(X_i - x)^{\balpha}}{\balpha !} \partial^{\balpha} \Fc(t|x) W_{n,i}(x).
            \end{align*}
            Applying Definition \ref{def.smooth}, using that $W_{n,i}(x)=0$ if $\|x-X_i\|>h,$ and setting $A_0 := L\sum_{|\balpha| = \ell}(\balpha !)^{-1},$
            \begin{align*}
                (a^*) &= \int_{\RR} \Big|\sum_{i=1}^n \sum_{1 \leq |\balpha| \leq \ell-1} \frac{(X_i - x)^{\balpha}}{\balpha!}\partial^{\balpha}\Fc(t|x)W_{n,i}(x)\\
                &\quad + \sum_{i=1}^n\sum_{|\balpha| = \ell}\frac{(X_i - x)^{\balpha}}{\balpha !}\partial^{\balpha}\Fc(t|\xi_i)W_{n,i}(x)\bigg|\, dt\\
                &= \int_{\RR} \Big|\sum_{i=1}^n\sum_{|\balpha| = \ell}\frac{(X_i - x)^{\balpha}}{\balpha !}\big(\partial^{\balpha}\Fc(t|\xi_i) - \partial^{\balpha}\Fc(t|x)\big)W_{n,i}(x)\bigg|\, dt\\
                &= \int_{\RR} \bigg|\sum_{i=1}^n \sum_{|\balpha| = \ell}\frac{(X_i - x)^{\balpha}}{\balpha !}\big(\partial^{\balpha} \Fc(t|\xi_i) - \partial^{\balpha}\Fc(t|x)\big)\big|W_{n,i}(x)\big|\, dt\\
                &\leq \sum_{i=1}^n \sum_{|\balpha| = \ell} \frac{|(X_i - x)^{\balpha}|}{\balpha !}\Bigg[\int_{\RR} \big|\partial^{\balpha} \Fc(t|\xi_i) - \partial^{\balpha} \Fc(t|x)\big|\, dt\Bigg]\big|W_{n, i}(x)\big|\\
                &\leq \sum_{i=1}^n \sum_{|\balpha| = \ell}\frac{Lh^\beta}{\balpha !}\big|W_{n,i}(x)\big|\\
                &= \sum_{i=1}^n A_0h^\beta \big|W_{n,i}(x)\big|.
            \end{align*}
            Hence,
            \begin{align*}
                (a) &\leq \sum_{i=1}^nA_0h^\beta\E\bigg[\big|W_{n,i}(X)\big|\mathds{1}_{E_n}(X)\bigg].
            \end{align*}
            To bound the expectation of the right-hand side, let $\mathbf{1} := (1)_{i=1}^m.$ We first show a useful inequality for the absolute value of the weights $W_{n, i}.$ Using the Cauchy-Schwarz inequality, the fact that $V(0) = (1, 0, \dots, 0),$ the properties of the operator norm as well as the fact that all the entries of $V((y - x)/h)$ are in $[-1, 1],$ we obtain
            \begin{align*}
                \begin{split}
                &\bigg|V^T(0)D_n(x)^{-1}V\bigg(\frac{y - x}{h}\bigg)\bigg| \mathds{1}_{E_n}(x)\mathds{1}(y \in \B(x, h)) \\ &\leq \bigg\|D_n(x)^{-1}V\bigg(\frac{y - x}{h}\bigg)\bigg\|_2\underbrace{\big\|V(0)\big\|_2}_{=1} \mathds{1}_{E_n}(x)\mathds{1}(y \in \B(x, h)) \\
                &\leq \underbrace{\bigg\|V\bigg(\frac{y - x}{h}\bigg)\bigg\|_2}_{\leq \|V(\mathbf{1})\|_2}\underbrace{\|D_n(x)^{-1}\|_{op}}_{\leq \lambda_1(D_n(x))^{-1}} \mathds{1}_{E_n}(x)\mathds{1}(y \in \B(x, h))\\
                &\leq \frac{\|V(\mathbf{1})\|_2}{\lambda_1^{thr}} \mathds{1}(y \in \B(x, h)),
                \end{split}
            \end{align*}
            where the last inequality follows from the definition of $E_n.$ This proves
            \begin{align}
                \label{eq.**} \tag{$\ast\ast$}
                \begin{split}
                &\bigg|V^T(0)D_n(X)^{-1}V\bigg(\frac{X_i - X}{h}\bigg)\bigg|\mathds{1}_{E_n}(X)\mathds{1}(X_i \in \B(X, h)) \\ &\leq \frac{\|V(\mathbf{1})\|_2}{\lambda_1^{thr}}\mathds{1}(X_i \in \B(X, h)),
                \end{split}
            \end{align}
            Together with $\P_{\sX}\{\B(X, h)\} \leq \ol p h^d,$ we obtain for $r>0,$
            \begin{align}
            \begin{split}
                &\E\bigg[\big|W_{n,i}(X)\big|^r\mathds{1}_{E_n}(X)\bigg] \\
                 &= \E\Bigg[\frac 1{n^rh^{rd}} \bigg|V^T(0)D_n(X)^{-1}V\bigg(\frac{X_i - X}{h}\bigg)\mathds{1}(X_i \in \B(X, h))\bigg|^r\mathds{1}_{E_n}(X)\Bigg]\\
                &\leq \frac{\|V(\mathbf{1})\|_2^r}{n^rh^{rd}(\lambda_1^{thr})^r}\E\Big[\mathds{1}(X_i \in \B(X, h))\Big]\\
                &\leq \frac{\|V(\mathbf{1})\|_2^r}{n^rh^{rd}(\lambda_1^{thr})^r}\E\big[\P_{\sX}\{\B(X, h)\}\big]\\
                &\leq 
                \frac{\|V(\mathbf{1})\|_2^r}{n^rh^{rd}(\lambda_1^{thr})^r}\ol p h^d.
                \end{split}
                \label{eq.bjsdhchsdj}
            \end{align}
            To bound (a), we only need $r=1,$
            \begin{align*}
                (a) &\leq \sum_{i=1}^n \frac{A_0\|V(\mathbf{1})\|_2\ol p}{n\lambda_1^{thr}}h^\beta =  \frac{A_0\|V(\mathbf{1})\|_2\ol p}{\lambda_1^{thr}}h^{\beta} =: C_1h^{\beta}.
            \end{align*}
            \noindent \textbf{\underline{Bound for $(b)$.}} 
            Applying the law of total expectation yields
            \begin{align*}
                (b) &= \E\Bigg[\E\bigg[\int_{\RR}\big|\ol \F_{\sY|\sX}(t|X) - \wh \F_{\sY|\sX}(t|X)\big|\, dt\mathds{1}_{E_n}(X)\Big| X^n, X\bigg]\Bigg].
            \end{align*}
            In a next step, we bound the inner conditional expectation using Fubini's theorem. For this step, we make the dependence of $E_n$ on $X^n \in \RR^{d\times n}$ explicit by writing $E_n(X^n).$ For all $x^n \in \RR^{d\times n}$ and $x \in \RR^d,$ we can apply Fubini's theorem to the function $f_n(y_1, \dots, y_n, t) = |\sum_{i=1}^n(\mathds{1}(y_i \leq t) - \Fc(x_i, t))W_{n, i}(x)\mathds{1}(x \in E_n(x^n))|,$ which is $\mB(\RR^{d + 1})$-measurable as a composition of $|\cdot|$ with a sum of $n \ \mB(\RR\times \RR)$-measurable functions. This leads to
            \begin{multline}
                \label{eq.fubini}
                \E\bigg[\int_{\RR}\big|\ol \F_{\sY|\sX}(t|X) - \wh \F_{\sY|\sX}(t|X)\big|\mathds{1}_{E_n}(X)\, dt\Big| X^n, X\bigg] \\
                = \int_{\RR} \E\Big[\big|\ol \F_{\sY|\sX}(t|X) - \wh \F_{\sY|\sX}(t|X)\big|\mathds{1}_{E_n}(X)\Big| X^n, X\Big]\, dt.
            \end{multline}
            Denote by $(b^*):= \E\big[|\ol \F_{\sY|\sX}(t|X) - \wh \F_{\sY|\sX}(t|X)|\mathds{1}_{E_n}(x)\big| X^n, X\big].$ The inner expectation on the right-hand side of \eqref{eq.fubini} is bounded as follows. Letting $C_\delta := C^{1/(2 + \delta)},$
            \begin{align}
            \label{eq.bound.we.have.to.justify.a.lot}
                \begin{split}
                (b^*) &\leq \E\Big[\big|\ol \F_{\sY|\sX}(t|X) - \wh \F_{\sY|\sX}(t|X)\big|^2\mathds{1}_{E_n}(X)\Big| X^n, X\Big]^{1/2}\\
                &\overset{(i)}{=} \bigg(\sum_{i=1}^n \Fc(t|X_i)\big(1 - \Fc(t|X_i)\big)W_{n, i}(X)^2\bigg)^{1/2}\mathds{1}_{E_n}(X)\\
                &\overset{(ii)}{\leq} \sqrt{\sup_{x \in \B(X, h)\cap[0, 1]^d}\Fc(t|x)(1 - \Fc(t|x))}\Big(\sum_{i=1}^n W_{n,i}(X)^2\Big)^{1/2}\mathds{1}_{E_n}(X)\\
                &\overset{(iii)}{\leq} \big\|W_n(X)\big\|_2 \sup_{x \in \B(X, h)}\PP\big\{|Y| \geq |t|\big|X' = x\big\}^{1/2}\mathds{1}_{E_n}(X)\\
                &\overset{(iv)}{\leq} \big\|W_n(X)\big\|_2\bigg[\mathds{1}\big(|t| < C_{\delta}\big) + \frac{C}{|t|^{2 + \delta}}\mathds{1}\big(|t| \geq C_{\delta}\big)\bigg]^{1/2}\mathds{1}_{E_n}(X),
                \end{split}
            \end{align}
            where $W_n(x) := (W_{n, 1}(x), \dots, W_{n, n}(x))$ and $X'$ is an i.i.d.\ copy of $X.$ We now give details about the steps $(i), \ (ii),$ $(iii)$ and $(iv)$ in the previous display. Notice that $\wh \F_{\smash{\sY|\sX}}(t|X)$ can be seen as a weighted sum of independent Bernoulli random variables with success probability $\Fc(t|X_i)$ and evaluating its variance gives $(i).$ To obtain step $(ii),$ recall that $X_i \in [0, 1]^d$ a.s., and that whenever $X \in E_n,$ if $X_i \in \B(X, h)^c,$ then $W_{n, i}(X) = 0.$ This means that 
            \begin{align*}
                \Fc(t|X_i)(1 - \Fc(t|&X_i))W_{n, i}(X)^2\\
                &\leq \sup_{x \in \B(X, h)\cap[0, 1]^d}\Fc(t|x)(1 - \Fc(t|x))W_{n, i}(X)^2,
            \end{align*}
            and using the monotonicity of the square root function on $[0, +\infty)$ finishes step $(ii).$ For step $(iii),$ we use the fact that for all $x \in \RR^d,$
            \begin{align*}
                \Fc(t|x)\big(1 - \Fc(t|x)\big) &\leq \Fc(t|x)\wedge \big(1 - \Fc(t|x)\big)\\
                &\leq \Fc(t|x)\mathds{1}(t \leq 0) + \big(1 - \Fc(t|x)\big)\mathds{1}(t > 0)\\
                &\leq \PP\big\{|Y| \geq |t| \big |X = x\big\}(\mathds{1}(t \leq 0) + \mathds{1}(t > 0))\\
                &= \PP\big\{|Y| \geq |t| \big|X = x\big\}.
            \end{align*}
            This, and $\B(X, h) \cap[0, 1]^d \subseteq [0, 1]^d,$ show that
            \begin{align*}
                \sqrt{\sup_{x \in \B(X, h)\cap[0, 1]^d}\Fc(t|x)(1 - \Fc(t|x))} \leq \sqrt{\sup_{x \in [0, 1]^d}\PP\big\{|Y| \geq |t| \big| X' = x\big\}},
            \end{align*}
            which concludes step $(iii).$ Step $(iv)$ is obtained by distinguishing the cases $|t| \geq C_{\delta}$ and $|t| < C_{\delta},$ using the fact that $\PP\{|Y|^{2 + \delta}\geq |t| |X' = x\} \leq 1$ for all $x,$ applying Markov's inequality and using Assumption \ref{ass.1} $(i)$ to derive
            \begin{align*}
                \sup_{x \in \B(X, h)}\PP\big\{|Y| \geq |t| \big| X' = x\big\} &= \sup_{x \in [0, 1]^d}\PP\big\{|Y|^{2 + \delta} \geq |t|^{2 + \delta}\big| X' = x\big\}\mathds{1}\big(|t| \geq C_{\delta}\big)\\
                &\ + \sup_{x \in [0, 1]^d}\PP\big\{|Y|^{2 + \delta} \geq |t|^{2 + \delta}\big| X' = x\big\}\mathds{1}\big(|t| < C_{\delta}\big)\\
                &\leq \sup_{x \in [0, 1]^d} \frac{\E\big[|Y|^{2 + \delta} \big|X' = x\big]}{|t|^{2 + \delta}}\mathds{1}\big(|t| \geq C_{\delta}\big)\\
                &\quad + \mathds{1}\big(|t| < C_{\delta}\big)\\
                &\leq \frac{C}{|t|^{2 + \delta}}\mathds{1}\big(|t| \geq C_{\delta}\big) + \mathds{1}\big(|t| < C_{\delta}\big).
            \end{align*}
            We now continue with the bound \eqref{eq.bound.we.have.to.justify.a.lot}, which we first integrate with respect to $t$ before taking the expectation with respect to the sample and $X.$ This, Jensen's inequality, and \eqref{eq.bjsdhchsdj} with $r=2$ yield
            \begin{align}
                \label{eq.markov.integral.constant}
                \begin{split}
                (b) &= \E\bigg[\int_{\RR}\big|\ol \F_{\sY|\sX}(t|X) - \wh \F_{\sY|\sX}(t|X)\big|\, dt \mathds{1}_{E_n}(X)\bigg]\\
                &\leq 4C_\delta \E\bigg[\big\|W_n(X)\big\|_2\mathds{1}_{E_n}(X)\bigg] \\
                &\leq 4C_\delta \bigg(\E\bigg[\big\|W_n(X)\big\|^2_2\mathds{1}_{E_n}(X)\bigg]\bigg)^{1/2} \\
                &= 4C_\delta \bigg(\E\bigg[\sum_{i=1}^n \big|W_{n,i}(X)\big|^2\mathds{1}_{E_n}(X)\bigg]\bigg)^{1/2} \\
                &\leq 4C_\delta \frac{\|V(\mathbf{1})\|_2\sqrt{\ol p}}{\lambda_1^{thr} \sqrt{nh^d}}\\
                &=: \frac{C_2}{\sqrt{nh^d}}.
                \end{split}
            \end{align}
            \textbf{\underline{Bound for $(c).$}} 
            Let $x \in [0, 1]^d.$ By Assumption \ref{ass.1} $(i),$ $E[|Y|]<\infty$ and thus
            \begin{align*}
                \int_{\RR}\big|\Fc(t|x) - \mathds{1}(t \geq 0)\big|\, dt &\leq \int_0^\infty \PP\{|Y| \geq t|X = x\} \, dt = E[|Y|] <\infty.
            \end{align*}
            Therefore,
            \begin{align*}
                (c) = \E\bigg[\mathds{1}(X \in E_n^c)\int_{\RR} \big|\Fc(t|X) - \mathds{1}(t \geq 0)\big|\, dt\bigg] \leq E[|Y|]\E[\mathds{1}(X \in E_n^c)].
            \end{align*}
            To bound $\E[\mathds{1}(X \in E_n^c)] = \E[\E[\mathds{1}(X \in E_n^c)|X]],$ we will focus on the inner conditional expectation. Given $X = x,$ two scenarios are possible. In the first scenario, at least one sampled covariate $X_i$ lies in $\B(x, h)$ and
            \begin{align*}
            D_n(x) &= \frac{1}{nh^d} \sum_{i=1}^n V\bigg(\frac{X_i - x}{h}\bigg)V^T\bigg(\frac{X_i - x}{h}\bigg)\mathds{1}(X_i \in \B(x, h))
            \end{align*}
            has at least one non-zero term. We can then apply concentration results to bound the probability that the smallest eigenvalue of $D_n(x)$ is small. In the second scenario, no covariate $X_i$ lies in $\B(x, h),$ and $D_n(x) = 0.$ This event is strictly included in the event $\lambda_1(D_n(x)) = 0.$ We partition the event $E_n^c$ accordingly. Let $G_n := [0, 1]^d \setminus (\cup_{i=1}^n \B(X_i, h)).$ We have
            \begin{align*}
                \E[\mathds{1}(X \in E_n^c)|X = x] &= \E[\mathds{1}(x \in E_n^c)]\\
                &= \E[\mathds{1}(x \in E_n^c\cap G_n)] + \E[\mathds{1}(x \in E_n^c\setminus G_n)]\\
                &\leq \E\big[\mathds{1}(x \in G_n)\big] + \E\big[\mathds{1}(x \in E_n^c \setminus G_n)\big].
            \end{align*}
            On the event $\{x \in G_n\}$, all $n$ covariate vectors $X_1, \dots, X_n$ lie in $[0, 1]^d\setminus \B(x, h),$ and
            \begin{align*}
                \E\big[\mathds{1}(x \in G_n)\big] &\leq \E\big[(1 -  \P_{\sX}\{\B(x, h)\})^n\big]\\
                &\leq \E\big[\exp(-n\P_{\sX}\{\B(x, h)\})\big]\\
                & \leq \E\big[(n\P_{\sX}\{\B(x, h)\})^{-1}\big],
            \end{align*}
            where we used that for any $t \in [0, 1], \ (1 - t)^n \leq e^{-tn} \leq (nt)^{-1}.$ After taking the expectation with respect to $X \sim \P_{\sX},$ we are left with
            \begin{align}
                \label{eq.1}
                \E[\mathds{1}(X \in E_n^c)] \leq \underbrace{\E\big[(n\P_{\sX}\{\B(X, h)\})^{-1}\big]}_{(c^*)} + \underbrace{\E[\mathds{1}(X \in E_n^c \setminus G_n)]}_{(c^{**})}.
            \end{align}
            The terms $(c^*)$ and $(c^{**})$ are bounded separately.\\
            \underline{Bound for $(c^*)$.} We proceed, as in \cite[equation (5.1)]{Gyorfi2002ADT}. Consider $(z_1, \dots, z_{T})$ to be the centers of an $h/2$-covering of $\supp(p).$ For any $h \leq ,$ and thus for $h=n^{-\beta/(2\beta + d)},$ it holds that $T \leq \lceil2/h\rceil^d \leq 3^d/h^d$ and
            \begin{align}
            \label{eq.half.ball.trick}
            \begin{split}
                (c^*) = \frac 1n\int \frac{d\P_{\sX}(x)}{\P_{\sX}\{\B(x, h)\}} &\leq \frac1n\sum_{i=1}^T \int_{\B(z_i, h/2)} \frac{d\P_{\sX}(x)}{\P_{\sX}\{\B(x, h)\}}\\
                &\overset{(*)}{\leq} \frac 1n\sum_{i=1}^T \int_{\B(z_i, h/2)}\frac{d\P_{\sX}(x)}{\P_{\sX}\{\B(z_i, h/2)\}}\\
                &= \frac 1n\sum_{i=1}^T \frac{\P_{\sX}\{\B(z_i, h/2)\}}{\P_{\sX}\{\B(z_i, h/2)\}}\\
                &= \frac {T}n \leq \frac{3^d}{nh^d},
            \end{split}
            \end{align}
            where step $(*)$ follows from the fact that for all $1 \leq i \leq T,$ and for all $x \in \B(z_i, h/2),$ the ball $\B(z_i, h/2)$ is included in the ball $\B(x, h),$ and hence $\P_{\sX}\{\B(x, h)\} \geq \P_{\sX}\{\B(z_i, h/2)\}.$\\
            \underline{Bound for $(c^{**})$.}
            For this step, we first bound $\E[\mathds{1}(x \in E_n^c\setminus G_n)].$ Let $x \in G_n^c,$ \[M_n := D_n(x) \quad \text{and} \ \ M := \E[M_n],\] where the expectation is taken with respect to the sample $X_1, \dots, X_n.$ The matrix $M_n$ is always positive semidefinite since it is a Gram matrix. Hence, $M$ is positive semidefinite and by \eqref{eq.D}, $M=\int_{[-1, 1]^d}V(t)V^T(t) p(x + th)\, dt.$ Consequently, $M$ is also positive definite since for any unit vector $v$ with $\|v\|_2=1,$
            \begin{align}
                \label{eq.eigenvalues.M.D}
                \begin{split}
                v^TMv &= \int_{[-1, 1]^d}\|v^TV(t)\|_2^2p(x + th)\, dt\\
                &\geq \ul p\int_{[-1, 1]^d}\|v^TV(t)\|_2^2\mathds{1}(x + th \in [-1, 1]^d)\, dt\\
                &\geq \ul p\lambda_1(D)^2\int_{[-1, 1]^d} \mathds{1}(x + th \in [-1, 1]^d)\, dt\\
                &\geq 2^{-d}\ul p\lambda_1(D)^2 > 0,
                \end{split}
            \end{align}
            with $D$ as defined in \eqref{eq.D_def}. Therefore the smallest eigenvalue of $M$ is strictly positive. Weyl's inequality \cite[Theorem 4.3.1]{zbMATH06125590} states that $\lambda_1(M)\leq \lambda_1(M_n(x))+\|M_n -M\|_{op}.$ Thus, for $$t^* := \ul p\lambda_1(D)/2^{d+1} \leq \lambda_1(M)/2$$ (see \eqref{eq.eigenvalues.M.D} for the right-most inequality), we have shown the inclusion
            \begin{align}
                \label{eq.recap.1}
                \{\lambda_1(D_n(x)) < t^*\} \subseteq \{\|D_n(x) - E[D_n(x)]\|_{op} > t^*\}.
            \end{align}
            We now apply Theorem 1.4 in \cite{Tropp_2011}, which we state here in a slightly adapted form.

            \begin{theorem}[Theorem 1.4 in \cite{Tropp_2011}]
                \label{th.bernstein.eigenvalues}
                Consider a sequence $A_1, \dots, A_n$ of independent, random, symmetric $m \times m$ matrices. Assume that for all $1 \leq k \leq n,$ $\E[A_k] = 0$ and $\lambda_{m}(A_k) \leq R,$ almost surely. Then, for all $t \geq 0,$
                \begin{align*}
                    \PP\bigg\{\lambda_m\bigg(\sum_{k=1}^nA_k\bigg) \geq t\bigg\} \leq d\exp\bigg(-\frac{t^2/2}{\sigma^2 + Rt/3}\bigg),
                \end{align*}
                with $\sigma^2 :=\|\sum_{k=1}^n\E[A_k^2]\|_{op}.$
            \end{theorem}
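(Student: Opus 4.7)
The plan is to prove this matrix Bernstein inequality by adapting the classical Chernoff--Bernstein argument from scalars to self-adjoint matrices via the \emph{matrix Laplace transform method}. Write $S := \sum_{k=1}^n A_k$. For any $\theta > 0$, Markov's inequality applied to the monotone map $\lambda \mapsto e^{\theta\lambda}$ together with the spectral bound $e^{\theta\lambda_m(S)} \leq \operatorname{tr}\exp(\theta S)$ (valid since $\exp$ is positive and $S$ is self-adjoint) yields
\begin{align*}
\PP\bigl\{\lambda_m(S) \geq t\bigr\} \;\leq\; e^{-\theta t}\,\E\bigl[\operatorname{tr}\exp(\theta S)\bigr].
\end{align*}
This reduces the problem to controlling the matrix moment generating function of the sum, which is hard because the $\theta A_k$ do not commute so $e^{\theta S} \neq \prod_k e^{\theta A_k}$ in general.

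The decisive tool is \emph{Lieb's concavity theorem}: for any fixed self-adjoint $L$, the map $H \mapsto \operatorname{tr}\exp(L + \log H)$ is concave on positive-definite $H$. Pulling the expectation inside the trace exponential one summand at a time via Jensen's inequality (conditioning on all other $A_j$) gives the subadditivity bound
\begin{align*}
\E\bigl[\operatorname{tr}\exp(\theta S)\bigr] \;\leq\; \operatorname{tr}\exp\Bigl(\sum_{k=1}^n \log\E[e^{\theta A_k}]\Bigr).
\end{align*}
This is the crux of the argument: it is precisely the matrix analogue of the scalar factorisation $\E[\exp(\theta\sum_k A_k)] = \prod_k \E[e^{\theta A_k}]$ that fails in the non-commutative setting, and Lieb's theorem is what salvages it.

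With the matrix mgf decoupled, I would next bound each summand. Using $\E[A_k] = 0$ and the one-sided bound $\lambda_m(A_k) \leq R$, the Taylor series of the exponential gives
\begin{align*}
e^{\theta A_k} \;\preceq\; I + \theta A_k + g(\theta) A_k^2, \qquad g(\theta) := \sum_{j \geq 2}\frac{\theta^j R^{j-2}}{j!} \;\leq\; \frac{\theta^2/2}{1 - R\theta/3}
\end{align*}
for $0 < \theta < 3/R$. Taking expectations, using the operator monotonicity of $\log$ (via $\log(I + X) \preceq X$ for $X \succeq 0$) and summing gives $\sum_k \log \E[e^{\theta A_k}] \preceq g(\theta)\sum_k \E[A_k^2]$. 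Because $\operatorname{tr}\exp$ is monotone with respect to $\preceq$ and $\operatorname{tr}\exp(M) \leq m\cdot e^{\lambda_m(M)}$, we obtain
\begin{align*}
\PP\bigl\{\lambda_m(S) \geq t\bigr\} \;\leq\; m\exp\bigl(-\theta t + g(\theta)\,\sigma^2\bigr).
\end{align*}
Finally, I would optimise by taking $\theta = t/(\sigma^2 + Rt/3)$, which exactly produces the exponent $-t^2/2/(\sigma^2 + Rt/3)$ claimed in the statement.

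The main obstacle is unambiguously Lieb's concavity theorem: it is a nontrivial result from matrix analysis with no elementary proof, and without it the non-commutativity of matrix exponentials blocks the entire scalar Bernstein derivation. The remaining steps (matrix Markov inequality, Taylor expansion of the matrix exponential, scalar optimisation) are faithful operator-theoretic transcriptions of the classical argument and pose no conceptual difficulty.
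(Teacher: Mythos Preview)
Your proof sketch is correct and faithfully reproduces Tropp's original argument. However, the paper does \emph{not} prove this statement: it is quoted verbatim as ``Theorem 1.4 in \cite{Tropp_2011}'' and invoked as a black box inside the proof of Theorem~\ref{th.main.theorem} (to control $\PP\{\|M_n - M\|_{op} \geq t^*\}$). So there is nothing in the paper to compare your argument to --- the authors simply cite the result.

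Two minor remarks on your write-up. First, the dimension prefactor you obtain is $m$ (the matrix size), which is correct; the paper's statement writes $d$, which is a typo carried over from Tropp's notation where the ambient dimension is called $d$. Second, the step $e^{\theta A_k} \preceq I + \theta A_k + g(\theta) A_k^2$ deserves one line of justification: it follows from the transfer rule because the scalar function $a \mapsto (e^{\theta a} - 1 - \theta a)/a^2$ is nondecreasing, so the scalar inequality $e^{\theta a} \leq 1 + \theta a + g(\theta)a^2$ holds for all $a \leq R$ (not just $0 \leq a \leq R$), which is what the spectral hypothesis $\lambda_m(A_k) \leq R$ actually gives you. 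With that clarified, the argument is complete.
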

            We now check that the assumptions of Theorem \ref{th.bernstein.eigenvalues} hold for $M_n.$ Recall that $m = \card\{\balpha \in \{0, \dots, \ell\}^d: |\balpha|\leq \ell\}$ does not depend on the sample size $n.$  By definition, $M_n$ is the average of Gram matrices of the form $V((X_i - x)/h)V^T((X_i - x)/h)$ over the $X_i$ that are $h$-close to $x.$ For such $X_i,$ the entries of $V((X_i - x)/h)$ are all upper-bounded by $1.$ Therefore, $M_n$ is a Gram matrix with all entries upper bounded by $1.$ Similarly, we also obtain 
            \begin{align*}
                \lambda_1(M) \leq m\max_{1 \leq i,j\leq m} |M_{ij}|= m\max_{1 \leq i,j \leq m}\big|\E[M_{n, ij}]\big| \leq m\E\Big[\max_{1 \leq i,j \leq m}|M_{n, ij}|\Big]\leq m.
            \end{align*}
            The (maximal) eigenvalue of $M_n$ satisfies 
            \begin{align}
            \label{eq.bound.eigenvalues}
                \lambda_m(M_n) = \|M_n\|_{op} \leq m\max_{1 \leq i,j \leq m} |M_{n, ij}| \leq m,
            \end{align}
            where $M_{n, ij}$ is the $i,j$-th entry of $M_n,$ and the first inequality is an immediate implication of the Gershgorin circle inequality. Therefore we also have $\|\E[M_n^2]\|_{op} \leq m^2$ and we can apply Theorem \ref{th.bernstein.eigenvalues} conditionally on $N = N(x) := \sum_{i=1}^n \mathds{1}(X_i \in \B(x, h)) \sim \Bin(n, p_x)$ where $p_x := \P_{\sX}\{\B(x, h)\}.$ Using moreover $k \geq 1$ and $t^*\leq \lambda_1(M)/2\leq m/2,$ we obtain
            \begin{align}
                \label{eq.applied.bernstein}
                \begin{split}
                \PP\bigg\{\|M_n - M\|_{op} \geq t^* \big| N = k\bigg\} &\leq m\exp\bigg(-\frac{(kt^*)^2}{2m^2 + 2mkt^*/3}\bigg)\\
                &\leq m\exp\bigg(-\frac{3(kt^*)^2}{7m^2k}\bigg)\\
                &\leq m\exp\bigg(-\frac{kt^{*2}}{4m^2}\bigg).
                \end{split}
            \end{align}
            Taking the expectation with respect to $N$ in \eqref{eq.applied.bernstein} and using that $(1-p_x+p_x e^t)^n$ is the moment generating function of $N \sim \Bin(n, p_x)$ leads to
            \begin{align*}
                \PP\bigg\{\|M_n - M\|_{op} \geq t^*\bigg\} &\leq m\E\bigg[\exp\bigg(-N\bigg(\frac{t^*}{2m}\bigg)^2\bigg)\bigg]\\
                &\leq m\bigg(1 - p_x + p_x\exp\bigg(-\frac{t^{*2}}{4m^2}\bigg)\bigg)^n\\
                &\leq m\bigg(1 - p_x\bigg(1 - \exp\bigg(-\frac{\ul p^2\lambda_1(D)^2}{2^{2(d + 2)}m^2}\bigg) \bigg)\bigg)^n\\
                &\leq m\bigg(1 -  h^d \underbrace{2^{-d} \ul p\bigg(1 - \exp\bigg(-\frac{\ul p^2\lambda_1(D)^2}{2^{2(d + 2)}m^2}\bigg) \bigg)}_{=: c_1}\bigg)^n\\
                &\leq m\exp(-nh^d c_1) \\
                &\leq \frac{m}{nh^dc_1}.
            \end{align*}
            This, combined with the law of total expectation and the inclusion \eqref{eq.recap.1} proves that (recall that $D_n$ also depends on $X_1, \dots, X_n$)
            \begin{align*}
                (c^{**}) &\leq \E[\mathds{1}(X \in E_n^c\setminus G_n)]\\
                &= \E\bigg[\PP\Big\{\big\{\lambda_1(D_n(X)) < t^*\big\} \cap \big\{X \in G_n^c\big\}\Big| X\Big\}\bigg]\\
                &\leq \E\bigg[\PP\Big\{\big\{\|D_n(X) - D(X)\|_{op} > t^*\big\} \cap \big\{X \in G_n^c\big\}\Big| X\Big\}\bigg]\\
                &\leq \frac{m}{nh^dc_1},
            \end{align*}
            which enables us to conclude with \eqref{eq.1} and \eqref{eq.half.ball.trick} that
            \begin{align*}
                (c) &\leq  2^d\E[|Y|]\frac{2 + m/c_1}{nh^d} =: \frac{C_3}{nh^d}.
            \end{align*}
            Finally, combining the bounds for $(a), \ (b)$ and $(c),$ and choosing $h = n^{-1/(2\beta + d)}$ shows that
            \begin{align*}
                \E\bigg[\int_{\RR} \big|\Fc(t|X) - \wh \F_{\sY|\sX}(t|X)\big|\, dt\bigg] &\leq (a) + (b) + (c)\\
                &\leq C_1 h^{\beta} + \frac{C_2}{\sqrt{n h^d}} + \frac{C_3}{nh^d}\\
                &\leq C_1h^{\beta} + \frac{C_2 + C_3}{\sqrt{nh^d}}\\
                &\leq \big(C_1\vee(C_2 + C_3)\big)n^{-\beta/(2\beta + d)}\\
                &=: C_0n^{-\beta/(2\beta + d)},
            \end{align*}
            completing the proof.            
        \end{proof}

        \subsection{Proofs for the lower bounds}
        Given a multi-index $\balpha = (\alpha_1, \dots, \alpha_d)$ in $\{0, \dots, \ell\}^d$ and a $d$-dimensional vector $x = (x_1, \dots, x_d)$ we denote
            \begin{align*}
                x_{\balpha} := (\underbrace{x_1, \dots, x_1}_{\alpha_1\text{ times}}, \underbrace{x_2, \dots, x_2}_{\alpha_2\text{ times}}, \dots, \underbrace{x_d, \dots, x_d}_{\alpha_d\text{ times}}) \in \RR^{|\balpha|}.
            \end{align*}
            For a function $\phi \colon [0, 1]^d\times \RR \to \RR,$ we write
            \begin{align*}
                \phi_{\balpha}\colon (x_{\balpha}, t) \in [0, 1]^{|\balpha|}\times \RR \mapsto \phi(\mathbf{z}, t),
            \end{align*}
            where $\mathbf{z} = (z_1, \dots, z_d)$ with
            \begin{align*}
                z_i = \begin{dcases}
                    \frac{1}{\alpha_i}\sum_{j = \alpha_1 + \dots + \alpha_{i-1} + 1}^{\alpha_1 + \dots + \alpha_i} x_{\balpha, i} & \text{ if } \alpha_i > 0\\
                    x_i & \text{ otherwise.}
                \end{dcases} 
            \end{align*}
            The notation allows us to use the combinatorial interpretation of the Fa\`a di Bruno formula (see Proposition \ref{prop.faa.di.bruno}).  We emphasize that for all $x \in (0, 1)^d,$ $\phi_{\balpha}(x_{\balpha}, t) = \phi(x, t)$ and 
            \begin{align*}
                \frac{\partial^{|\balpha|}}{\partial x_{\balpha, 1}\dots \partial x_{\balpha, |\balpha|}}\phi_{\balpha}(x_{\balpha}, t) = \partial^{\balpha}\phi(x, t),
            \end{align*}
            meaning that, up to a change of representation, the functions $\phi$ and $\phi_{\balpha}$ are essentially the same. Finally, let $\bbeta = (\beta_1, \dots, \beta_d).$ We denote the componentwise difference by $\balpha - \bbeta.$ The notation $\balpha \leq \bbeta$ means that $\alpha_i \leq \beta_i$ for all $1 \leq i\leq d$ and the strict inequality $\balpha < \bbeta$ additionally means that $\alpha_i < \beta_i$ for at least one of the indexes $1 \leq i \leq d.$\\
            We now state an adapted version of the Fa\`a di Bruno formula in \cite{hardy2006combinatorics}.
        \begin{proposition}[Fa\`a di Bruno formula]
            \label{prop.faa.di.bruno}
            Let $\balpha \in \NN^d$, $g\colon \RR^d \to \RR,$ and $h\colon \RR \to \RR.$ If $h$ and $g$ admit a sufficient amount of derivatives, then
            \begin{align}
                \partial^{\balpha} (h\circ g)(x) = \sum_{\pi \in \Pi} h^{(|\pi|)}(g(x)) \prod_{B \in \pi} \partial^{B_*} g_{\balpha}(x_{\balpha}),
            \end{align}
            where 
            \begin{itemize}
                \item[-] \ $\pi$ runs through the set $\Pi$ of partitions of $\{1, \dots, |\balpha|\},$
                \item[-] \ for $B \in \pi, \ B_*$ denotes the multi-index $(\mathds{1}(i \in B))_{1 \leq i \leq |\balpha|},$
                \item[-] \ $\partial^{B_*}$ is the operator $\prod_{i=1}^{|\balpha|}\partial^{B_{*i}}/\partial x_{\balpha, i},$ and
                \item[-] \ $|S|$ denotes the cardinality of a set $S.$
            \end{itemize}    
        \end{proposition}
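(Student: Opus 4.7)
The strategy is to prove the formula by induction on $n := |\balpha|$, after first reducing to the case where all derivatives are taken with respect to distinct variables. The reduction relies on the identity stated just above the proposition, namely $\partial^{\balpha}\phi(x) = \partial_{x_{\balpha,1}}\cdots\partial_{x_{\balpha,n}}\phi_{\balpha}(x_{\balpha})$, together with the observation that $(h\circ g)_{\balpha}(x_{\balpha}) = h(g_{\balpha}(x_{\balpha}))$. Writing $y := x_{\balpha}$ and $G := g_{\balpha}$, it therefore suffices to establish that
\begin{equation*}
\frac{\partial^n}{\partial y_1\cdots\partial y_n}\,h(G(y)) \;=\; \sum_{\pi\in\Pi_n}h^{(|\pi|)}(G(y))\prod_{B\in\pi}\partial^{B_*}G(y),
\end{equation*}
where $\Pi_n$ denotes the set of set-theoretic partitions of $\{1,\dots,n\}$ and $\partial^{B_*} = \prod_{i\in B}\partial/\partial y_i$. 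The original statement then follows by translating back through the $\balpha$-unfolding.

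\textbf{Induction.} The base case $n=1$ is the ordinary chain rule $\partial_{y_1}h(G) = h'(G)\partial_{y_1}G$: the unique partition of $\{1\}$ is $\{\{1\}\}$, producing exactly this term. For the inductive step, assume the formula holds for $n$ and apply $\partial_{y_{n+1}}$ to both sides. By the product rule combined with the chain rule, each summand indexed by $\pi\in\Pi_n$ yields, on the one hand, a \emph{singleton-creation} contribution $h^{(|\pi|+1)}(G)\,(\partial_{y_{n+1}}G)\prod_{B\in\pi}\partial^{B_*}G$ arising when $\partial_{y_{n+1}}$ falls on the factor $h^{(|\pi|)}(G)$, and, on the other hand, for each $B\in\pi$, a \emph{block-enlargement} contribution $h^{(|\pi|)}(G)\,(\partial^{B_*\cup\{n+1\}}G)\prod_{B'\in\pi\setminus\{B\}}\partial^{B_{*}'}G$ arising when $\partial_{y_{n+1}}$ falls on the factor $\partial^{B_*}G$.

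\textbf{Combinatorial bijection.} The decisive observation is that every $\pi'\in\Pi_{n+1}$ arises exactly once across these two families. Given $\pi'$, let $B'$ be the unique block of $\pi'$ containing $n+1$: if $B'=\{n+1\}$, then $\pi'$ appears in the singleton-creation family with parent $\pi = \pi'\setminus\{B'\}\in\Pi_n$; otherwise $\pi'$ appears in the block-enlargement family with parent $\pi = (\pi'\setminus\{B'\})\cup\{B'\setminus\{n+1\}\}\in\Pi_n$ and $B = B'\setminus\{n+1\}$. This inverse assignment is well defined and injective, so every partition of $\{1,\dots,n+1\}$ is produced exactly once, which closes the induction.

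\textbf{Main obstacle.} The analytic content is elementary, reducing to the one-variable chain rule and the Leibniz product rule; the real work is the combinatorial matching between terms generated by successive differentiation and partitions of $\{1,\dots,n+1\}$. The $\balpha$-unfolding notation introduced just before the proposition is designed precisely to keep this bookkeeping transparent: because each coordinate of $x_{\balpha}$ is treated as an independent variable of $g_{\balpha}$, Leibniz differentiation produces one term per pair (block to enlarge, partition) without any multinomial overcounting, and the bijection described above delivers the formula cleanly.
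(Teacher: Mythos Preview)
Your proof is correct. The paper, however, does not supply its own proof of this proposition: it is merely stated as ``an adapted version of the Fa\`a di Bruno formula'' from the cited reference and left without argument. Your approach---unfolding $\partial^{\balpha}$ into $|\balpha|$ distinct single-variable derivatives via the $x_{\balpha}$ device asserted just above the proposition, then inducting on $n=|\balpha|$ using the singleton-creation/block-enlargement bijection between $\Pi_n$ and $\Pi_{n+1}$---is exactly the standard combinatorial proof one finds in that reference, so there is nothing within the paper itself to compare your argument against.
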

        From now on, $\Phi$ denotes the cdf of $\mN(0, 1),$ and $p_{\sigma}(t, \mu)$ denotes the density of $\mN(\mu, \sigma^2).$ In particular, we write $p(t) = p_1(t, 0)$ for the density of the standard normal distribution $\mN(0, 1).$ The proof of Proposition \ref{prop.smoothness.is.satisfied} relies on an intermediary result. For $k=0,1,\ldots,$ the Hermite polynomial of order $k$ is 
        \begin{align}
        \label{eq.hermite}
            H_k(t) := (-1)^{k}e^{t^2/2}\frac{d^k}{dt^k}e^{-t^2/2}.
        \end{align}
        In particular, $\Phi^{(k)}(t) = (-1)^{k-1}H_{k-1}(t)p(t).$

        \begin{lemma}
        \label{lem.prod.lipschitz}
            Let $L > 0,$ and $\beta = \ell + \gamma$ with integer $\ell$ and $\gamma \in (0, 1].$ If $f_1, \dots, f_k \in \mH(L, \beta),$ then for all $x, y \in [0, 1]^d,$ the function $\prod_{i=1}^k f_i$ satisfies
            \begin{align*}
                \bigg|\prod_{i=1}^kf_i(x) - \prod_{i=1}^k f_i(y)\bigg| \leq kL^k\|x - y\|^\gamma.
            \end{align*}
        \end{lemma}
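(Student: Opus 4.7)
The plan is to prove the estimate by the standard telescoping-product identity, after first extracting from the definition of $\mH(L,\beta)$ the two pointwise estimates that each $f_i$ is uniformly bounded by $L$ and is $\gamma$-Hölder continuous on $[0,1]^d$ with constant at most $L$.

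First I would record these two bounds on any $f \in \mH(L,\beta)$. The uniform bound $\|f\|_\infty \leq L$ is immediate from the $\balpha = 0$ summand in $\sum_{0 \leq |\balpha| \leq \ell - 1}\|\partial^\balpha f\|_\infty \leq L$ when $\ell \geq 1$. The Hölder estimate $|f(x) - f(y)| \leq L\|x-y\|^\gamma$ for $x, y \in [0,1]^d$ reads off directly from the defining inequality when $\ell = 0$, and for $\ell \geq 1$ it follows from the bounded lower-order partials via the mean-value inequality, combined with the observation that $\|x - y\| \leq 1$ on the cube so that any Lipschitz-type control upgrades to a $\gamma$-Hölder one through $\|x - y\| \leq \|x-y\|^\gamma$.

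Next I would use the telescoping identity
\[\prod_{i=1}^k f_i(x) - \prod_{i=1}^k f_i(y) = \sum_{i=1}^k \Big(\prod_{j<i} f_j(x)\Big)\bigl(f_i(x) - f_i(y)\bigr)\Big(\prod_{j>i} f_j(y)\Big),\]
which is obtained by successively inserting and cancelling the mixed products $f_1(x)\cdots f_{i-1}(x)f_i(y)\cdots f_k(y)$. Taking absolute values, applying the triangle inequality, and inserting the two bounds from the previous step, each of the $k$ summands is at most $L^{k-1}\cdot L\|x-y\|^\gamma = L^k\|x-y\|^\gamma$; summing yields the advertised $kL^k\|x-y\|^\gamma$.

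There is no substantive obstacle: the argument is elementary once the two pointwise estimates on each $f_i$ are in hand. The only mildly delicate point is verifying the Hölder estimate with precisely constant $L$ in the low-smoothness cases $\ell \in \{0,1\}$, which is a matter of carefully using the compactness of $[0,1]^d$ to convert between Lipschitz and Hölder exponents.
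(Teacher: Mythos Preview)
Your proposal is correct and follows essentially the same route as the paper: the paper's proof is precisely the telescoping bound $\big|\prod_i f_i(x) - \prod_i f_i(y)\big| \leq L^{k-1}\sum_i |f_i(x)-f_i(y)|$, followed by $|f_i(x)-f_i(y)| \leq L\|x-y\|^{1+(\gamma-1)\mathds{1}(\ell=0)}$ and the observation $\|x-y\| \leq \|x-y\|^\gamma$ on the cube. You have in fact been slightly more explicit than the paper in isolating the two pointwise ingredients and in flagging the care needed for $\ell\in\{0,1\}$.
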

        \begin{proof}
            Let $x, y \in [0, 1]^d,$ then
            \begin{align*}
                \bigg|\prod_{i=1}^k f_i(x) - \prod_{i=1}^k f_i(y)\bigg| \leq L^{k-1}\sum_{i=1}^k\big|f_i(x) - f_i(y)\big| \leq kL^k\|x - y\|^{1 + (\gamma - 1)\mathds{1}(\ell = 0)}.
            \end{align*}
            Finally, since $\gamma \leq 1$ and $\|x - y\| \leq 1,$ it holds that $\|x - y\| \leq \|x - y\|^\gamma,$ completing the proof.
        \end{proof}
        \begin{proposition}
        \label{prop.exp.reg.satisfies.ass}
            Let $\sigma > 0$ and $\beta = \ell + \gamma$ with integer $\ell$ and $\gamma \in (0, 1].$ If $f \in \mH(L, \beta)$ and $\Fc(t|x) = \Phi\big((t - f(x))/\sigma\big),$ then there exists a constant $K> 0$ such that
            \begin{align*}
                \max_{|\balpha| = \ell} \ \sup_{\substack{x, y \in [0, 1]^d\\ x\neq y}}\|x - y\|^{-\gamma} \int_{\RR} \big|\partial^{\balpha}\Fc(t|x) - \partial^{\balpha}\Fc(t|y)\big|\, dt \leq LK.
            \end{align*}
        \end{proposition}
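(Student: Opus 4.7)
The plan is to apply the Fa\`a di Bruno formula of Proposition \ref{prop.faa.di.bruno} with outer function $\Phi$ and inner function $g_t(x) := (t - f(x))/\sigma$, yielding an expansion of $\partial^{\balpha} \Fc(t|x)$ as a finite sum indexed by partitions $\pi$ of $\{1, \ldots, |\balpha|\} = \{1, \ldots, \ell\}$. Since $g_t$ is affine in $f$, one has $\partial^{B_*} (g_t)_{\balpha}(x_{\balpha}) = -\sigma^{-1} \partial^{B_*} f_{\balpha}(x_{\balpha})$ for every block $B$ with $|B| \geq 1$, and using $\Phi^{(k)}(z) = (-1)^{k-1} H_{k-1}(z) p(z)$ for $k \geq 1$ each summand takes the form
\begin{align*}
    (\pm \sigma^{-|\pi|}) \, R_{\pi}(x) \, \Psi_{\pi}\!\left(\frac{t - f(x)}{\sigma}\right),
\end{align*}
where $R_{\pi}(x) := \prod_{B \in \pi} \partial^{B_*} f_{\balpha}(x_{\balpha})$ is a product of partial derivatives of $f$ and $\Psi_{\pi}(z) := H_{|\pi|-1}(z) p(z)$.

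For each partition $\pi$, I would then bound the contribution $\int_{\RR} |R_{\pi}(x) \Psi_{\pi}(g_t(x)) - R_{\pi}(y) \Psi_{\pi}(g_t(y))| \, dt$ by the splitting
\begin{align*}
    |R_{\pi}(x) - R_{\pi}(y)| \int_{\RR} |\Psi_{\pi}(g_t(x))| \, dt + |R_{\pi}(y)| \int_{\RR} |\Psi_{\pi}(g_t(x)) - \Psi_{\pi}(g_t(y))| \, dt.
\end{align*}
The first integral equals the finite constant $\sigma \|\Psi_{\pi}\|_{L^1(\RR)}$ after a change of variable. For the second, another change of variable reduces it to $\sigma \int_{\RR} |\Psi_{\pi}(u) - \Psi_{\pi}(u + (f(x)-f(y))/\sigma)| \, du$, which is controlled by $|f(x) - f(y)| \cdot \|\Psi_{\pi}'\|_{L^1(\RR)}$ through the standard $L^1$ translation estimate for absolutely continuous functions. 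Using the Hermite recursion one finds $\Psi_{\pi}'(u) = -H_{|\pi|}(u) p(u)$, whose $L^1$ norm is finite.

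The two remaining ingredients are (i) a H\"older bound on $R_{\pi}$ obtained by telescoping in the spirit of Lemma \ref{lem.prod.lipschitz}: each factor $\partial^{B_*} f_{\balpha}(x_{\balpha})$ is a partial derivative of $f$ of order $|B| \leq \ell$, hence $\gamma$-H\"older with a constant proportional to $L$ and a sup-norm of the same order (for $|B| = \ell$ this is by definition of $\mH(L, \beta)$, and for $|B| < \ell$ it follows from the uniform bound on lower-order derivatives after interpolating to get a bound on the order-$\ell$ derivative); and (ii) the estimate $|f(x) - f(y)| \leq c_{\ell, d} L \|x - y\|^{\gamma}$, immediate from the definition when $\ell = 0$ and from the uniform bound on $\nabla f$ combined with $\|x - y\| \leq 1$ when $\ell \geq 1$. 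Assembling the two splittings across all partitions $\pi$ and over $|\balpha| = \ell$ yields $\int_{\RR} |\partial^{\balpha} \Fc(t|x) - \partial^{\balpha} \Fc(t|y)| \, dt \leq C(L, \beta, \sigma, d) \|x - y\|^{\gamma}$, which is the desired claim after absorbing a factor of $L$ into $K$. I expect the main difficulty to be bookkeeping rather than analytical: justifying cleanly that the order-$\ell$ derivatives of an $f \in \mH(L, \beta)$ are uniformly bounded by a multiple of $L$, and organising the Fa\`a di Bruno combinatorics so that the finitely many constants $\|\Psi_{\pi}\|_{L^1}$ and $\|\Psi_{\pi}'\|_{L^1}$ can be absorbed into a single $K = K(L, \beta, \sigma, d)$.
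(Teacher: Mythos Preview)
Your proposal is correct and follows essentially the same route as the paper: Fa\`a di Bruno with outer function $\Phi$ and inner function $(t-f(x))/\sigma$, followed by a product splitting and H\"older control of the $f$-dependent factors via Lemma \ref{lem.prod.lipschitz}. The paper differs only organisationally: it first factors $\partial^{\balpha}\Fc(t|x)=-p_\sigma(t,f(x))\,Q(t,x)$ with $Q$ a polynomial in $t$ and splits once at that level (bounding $|p_\sigma(t,f(x))-p_\sigma(t,f(y))|$ by a pointwise Lipschitz estimate in $\mu$ rather than your $L^1$-translation argument), and it treats the case $\ell=0$ separately by a direct mean-value bound---note that your formula $\Psi_\pi=H_{|\pi|-1}\,p$ requires $|\pi|\geq 1$, so you will need the same split.
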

        
        \begin{proof}
            Let $f \in \mH(L, \beta)$ and $ x, y \in [0, 1]^d.$ By assumption, $\Fc(t|x)=\Phi\big((t-f(x))/\sigma\big).$ 
            Recall that $p_\sigma(t, 0)$ denotes the density of $\mN(0, \sigma^2).$ We distinguish the cases $\beta \in (0, 1]$ and $\beta > 1.$\\
            \textbf{\underline{Case $\beta \in (0, 1].$}} We can directly use the mean value theorem followed by our assumption on $f$ to bound
            \begin{align*}
                \int_{\RR} \big|\Fc(t|x) - \Fc(t|y)\big|\, dt &\leq \int_{\RR}|f(x) - f(y)| \sup_{u \in [t - f(y), t- f(x)]}p_\sigma(u, 0)\, dt\\
                &\leq L\|x - y\|^\beta\int_{\RR}\sup_{u \in [t - L, t + L]}p_\sigma(u, 1)\, dt\\
                &= L\|x - y\|^\beta\bigg[\int_{t: |t| \leq L} p_{\sigma}(0, 0)\, dt\\
                &\qquad + 2\int_L^{\infty} p_{\sigma}(t - L, 0)\, dt\bigg]\\
                &\leq L\bigg(\frac{2L}{\sqrt{2\pi\sigma^2}}+1\bigg)\|x - y\|^\beta \\
                &=: LK\|x - y\|^\beta,
            \end{align*}
            where $K = \sqrt{2}L /\sqrt{\pi\sigma^2}+1$ does not depend on $(\beta,x, y).$ This proves the claim in the case $\beta \leq 1.$\\
            \textbf{\underline{Case $\beta > 1.$}} Let $\beta = \ell + \gamma > 1$ with integer $\ell \geq 1$ and $\gamma \in (0, 1].$ By assumption
            \begin{align*}
                \Fc(t|x) = \Phi\bigg(\frac{t - f(x)}{\sigma}\bigg).
            \end{align*}
            Set $g(x, t) := (t - f(x))/\sigma.$ Notice that $p_\sigma(t, f(x)) = \sigma^{-1}p(g(x, t)),$ and for all multi-indexes $\bbeta \in \{0, \dots, \ell\}^d,$ with $|\bbeta| \geq 1,$  $\partial^{\bbeta} g(x, t) = -\sigma^{-1}\partial^{\bbeta} f(x).$ Additionally, any partition $\pi$ contains at least one element. Applying Proposition \ref{prop.faa.di.bruno} and applying the Hermite polynomials defined in \eqref{eq.hermite} shows that
            \begin{align*}
                \partial^{\balpha}\Fc(t|x) &= \partial^{\balpha}(\Phi\circ g)(x, t)\\
                &= \sum_{\pi \in \Pi}\Phi^{(|\pi|)}(g(x, t))\prod_{B \in \pi}\partial^{B_*}g_{\balpha}(x_{\balpha}, t)\\
                &= \sum_{\pi \in \Pi}\Phi^{(|\pi|)}\bigg(\frac{t - f(x)}{\sigma}\bigg)\bigg(\frac{-1}\sigma\bigg)^{|\pi|}\prod_{B \in \pi}\partial^{B_*}f_{ \balpha}(x_{\balpha})\\
                &= p(g(x, t))\sum_{\pi \in \Pi}\frac{(-1)^{2|\pi|-1}}{\sigma^{|\pi|}}H_{|\pi|-1}\bigg(\frac{t - f(x)}{\sigma}\bigg)\prod_{B \in \pi}\partial^{B_*}f_{\balpha}(x_{\balpha})\\
                &= -p_\sigma(t, f(x))\underbrace{\sum_{\pi \in \Pi}H_{|\pi|-1}\bigg(\frac{t - f(x)}{\sigma}\bigg)\frac 1{\sigma^{|\pi|-1}}\prod_{B \in \pi}\partial^{B_*}f_{\balpha}(x_{\balpha})}_{=: Q(t, x)}.
            \end{align*}
            Using $|a_1b_1 - a_2b_2| \leq |a_1 - a_2||b_2| + |a_1||b_1 - b_2|,$ we obtain
            \begin{align}
            \label{eq.bound.delta.F}
            \begin{split}
                \big|\partial^{\balpha}\Fc(t|x) - \partial^{\balpha}\Fc(t|y)\big| &= \big|p_\sigma(t, f(x))Q(t, x) - p_\sigma(t, f(y))Q(t, y)\big|\\
                &\leq \underbrace{p_\sigma(t, f(x))\big|Q(t, x) - Q(t, y)\big|}_{(a)}\\
                &\quad + \underbrace{\big|p_\sigma(t, f(x)) - p_\sigma(t, f(y))\big||Q(t, y)|}_{(b)}.
            \end{split}
            \end{align}
            We now bound the terms $(a)$ and $(b)$ separately.\\
            \textbf{\underline{Bound for $(a).$}} Since $H_{|\pi|-1}$ is a Hermite polynomial, it follows that $t\mapsto Q(t, x)$ is a polynomial for any $x.$ Denote by $\{q_k(x)\}_{k=0}^{\ell}$ its coefficients so that
            \begin{align*}
                Q(t, x) = \sum_{k = 0}^{\ell} q_k(x)t^k.
            \end{align*}
            Each $q_k(x)$ is a polynomial in the partial derivatives up to order $\ell$ of $f$ at $x.$ Applying Lemma \ref{lem.prod.lipschitz} to each of the $q_k(x)$ ensures the existence of a constant $M$ independent of $x, t$ and $f$ such that for all $0 \leq k \leq \ell, \ |q_k(x) - q_k(y)| \leq ML\|x - y\|^\gamma.$ This, together with the triangle inequality and $P(|t|) := M\sum_{k=0}^{\ell} |t|^k,$ shows that
            \begin{align}
                \label{eq.bound.a}
                (a) = p_\sigma(t, f(x))\big|Q(t, x) - Q(t, y)\big| \leq p_\sigma(t, f(x))L\|x - y\|^\gamma P(|t|)
            \end{align}
            and concludes the bound for $(a).$\\
            \textbf{\underline{Bound for $(b).$}} We can write
            \begin{align*}
                \big|p_\sigma(t, f(x)) - p_\sigma(t, f(y))\big| &= p_\sigma(t, 0) \frac{|p_\sigma(t, f(x)) - p_\sigma(t, f(y))|}{p_{\sigma}(t, 0)}\\
                &= p_\sigma(t, 0)\bigg|\exp\bigg(\frac{2tf(x) - f(x)^2}{2\sigma^2}\bigg)\\
                &\qquad - \exp\bigg(\frac{2tf(y) - f(y)^2}{2\sigma^2}\bigg)\bigg|.
            \end{align*}
            The function $\mu \mapsto \exp\big((2t\mu - \mu^2)/(2\sigma^2)\big)$ is differentiable and its derivative is bounded by $(|t| + \mu)/\sigma^2.$ Its restriction to $[-L, L]$ is therefore Lipschitz with Lipschitz constant at most $(|t| + L)/\sigma^2.$ Consequently
            \begin{align*}
                \big|p_\sigma(t, f(x)) - p_\sigma(t, f(y))\big| &\leq p_\sigma(t, 0)|f(x) - f(y)|\frac{|t| + L}{\sigma^2}\\
                &\leq p_\sigma(t, 0)L\|x - y\|^\gamma\frac{|t| + L}{\sigma^2}.
            \end{align*}
            Recall that $Q(t, y) = \sum_{k=0}^{\ell} q_k(y) t^k$ and that the coefficients $q_k(y)$ are polynomials in the derivatives of $f$ up to order $\ell$ at $y.$ Since $f \in \mH(L, \beta),$ it holds that $|q_k(y)| \leq a_k,$ where $a_k$ is a linear combination of powers of $L$ and does not depend on $y.$ With $R(|t|) := ((|t| + L)/\sigma^2)\sum_{k=0}^{\ell} a_k|t|^k,$
            \begin{align}
                \label{eq.bound.b}
                (b) = \big|p_\sigma(t, f(x)) - p_\sigma(t, f(y))\big||Q_y(t)| \leq p_\sigma(t, 0)L\|x - y\|^\gamma R(|t|),
            \end{align}
            which concludes the bound for $(b).$ Plugging \eqref{eq.bound.a} and \eqref{eq.bound.b} into \eqref{eq.bound.delta.F} results in
            \begin{align*}
                \int_{\RR}\big|\partial^{\balpha}\Fc(t|x) -\partial^{\balpha} \Fc(t|y)\big|\, dt &\leq L\|x - y\|^\gamma\bigg[\int_{\RR} R(|t|)p_\sigma(t, 0)\, dt\\
                &\quad + \int_{\RR}P(|t|)p_\sigma(t, f(x))\, dt\bigg].
            \end{align*}
            Both integrals on the right-hand side are linear combinations of absolute moments of $\mN(0, \sigma^2)$ and $\mN(f(x), \sigma^2)$ with positive coefficients. Absolute moments of $\mN(\mu, \sigma^2)$ of all orders exist and are increasing functions of $|\mu|.$ Hence we can bound $\int_{\RR} P(|t|)p_{\sigma}(t, f(x)) \, dt\leq \int_{\RR} P(|t|)p_{\sigma}(t, L) \, dt.$ The latter is independent of $x$ and $y.$ Finally, since $x, y$ are arbitrary, and by eventually taking the max over $|\balpha| = \ell,$ we conclude that there exists a constant $K>0$ such that
            \begin{align*}
                \max_{|\balpha| = \ell} \ \sup_{\substack{x, y \in [0, 1]^d \\ x\neq y}}\|x - y\|^{-\gamma} \int_{\RR}\big|\partial^{\balpha} \Fc(t|x) - \partial^{\balpha} \Fc(t|y)\big|\, dt \leq LK.
            \end{align*}
        \end{proof}

        \begin{proof}[\textit{\textbf{Proof of Proposition \ref{prop.smoothness.is.satisfied}}}]
            The claim follows directly by applying Proposition \ref{prop.exp.reg.satisfies.ass} with $L$ small enough.
        \end{proof}

        For a self-contained presentation of the material, we include a lower bound for nonparametric regression under smoothness constraints with respect to the $L^1$ loss. While this is a known result, we could not find exactly this version in the literature. 
        \begin{proposition}
        \label{prop.lower.bounds.regression}
            Work under the assumptions of Model \eqref{eq.regression.model}. If $\P_{\sX}$ admits a density $p$ supported on $[0, 1]^d$ such that $0 < \ul p \leq p \leq \ol p < \infty$ for some $\ul p$ and $\ol p,$ then there exists $c = c(L, \beta, \sigma^2, \ul p, \ol p, d)$ such that
            \begin{align*}
                \inf_{\wh f} \sup_{f \in \mH(L, \beta)} \E\big[\|\wh f - f\|_{L^1(\P_{\sX})}\big] &\geq cn^{-\beta/(2\beta + d)},
            \end{align*}
            where the infimum is taken over all estimators.
        \end{proposition}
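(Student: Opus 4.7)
My plan is to prove this lower bound via Assouad's lemma applied to a standard hypercube-of-bumps construction. Fix a nonzero $K \in C_c^\infty(\RR^d)$ supported in the open unit cube $(0,1)^d$ and, for a positive integer $M$ to be chosen later, set $h := 1/M$ and partition $[0,1]^d$ into $M^d$ cubes of side $h$ with centres $\{x_j\}$. For each $\omega \in \{-1,+1\}^{M^d}$, define the hypothesis
\begin{align*}
f_\omega(x) := c_0 L \sum_j \omega_j h^{\beta} K\bigg(\frac{x - x_j}{h}\bigg),
\end{align*}
with $c_0 > 0$ a small absolute constant.

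The first step is to check $f_\omega \in \mH(L,\beta)$ uniformly in $\omega$ and $M$. Because the bumps have pairwise disjoint supports contained in $[0,1]^d$, it suffices to control a single summand using $\partial^\balpha\bigl[h^\beta K((\cdot-x_j)/h)\bigr] = h^{\beta-|\balpha|}(\partial^\balpha K)((\cdot - x_j)/h)$. Combined with $\beta = \ell + \gamma$, this gives $\|\partial^\balpha f_\omega\|_\infty \leq c_0 L\|\partial^\balpha K\|_\infty$ for $|\balpha| \leq \ell-1$ and an intra-cell $\gamma$-Hölder seminorm of $\partial^\balpha f_\omega$ (for $|\balpha|=\ell$) equal to $c_0 L[\partial^\balpha K]_\gamma$, both independent of $h$. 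For $x,y$ in distinct cells, one interpolates through a boundary point of the active bump's support where $\partial^\balpha K$ vanishes (this uses $K \in C_c^\infty$); the resulting constant is absorbed by choosing $c_0$ sufficiently small. Thus $f_\omega \in \mH(L, \beta)$ for a small enough, fixed $c_0$.

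The next step is to compute the two key quantities for Assouad. Denoting by $\rho(\omega,\omega')$ the Hamming distance, disjointness of supports yields
\begin{align*}
\|f_\omega - f_{\omega'}\|_{L^1(\P_\sX)} &\geq 2\ul p\, c_0 L \|K\|_{L^1}\, h^{\beta+d}\, \rho(\omega,\omega'), \\
\|f_\omega - f_{\omega'}\|_{L^2(\P_\sX)}^2 &\leq 4\ol p\, c_0^2 L^2 \|K\|_{L^2}^2\, h^{2\beta+d}\, \rho(\omega,\omega').
\end{align*}
Under model \eqref{eq.regression.model}, the Kullback--Leibler divergence between the product laws $\P_\omega^n$ and $\P_{\omega'}^n$ equals $(n/2\sigma^2)\|f_\omega-f_{\omega'}\|_{L^2(\P_\sX)}^2$, so for neighbouring vertices it is bounded by a constant times $nh^{2\beta+d}$. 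Applying Assouad's lemma (see e.g.\ \cite[Ch.~2]{MR2724359}) together with Pinsker's inequality then gives
\begin{align*}
\inf_{\wh f}\sup_\omega \E\bigl[\|\wh f - f_\omega\|_{L^1(\P_\sX)}\bigr] \geq \tfrac12 M^d\, \ul p c_0 L \|K\|_{L^1}\, h^{\beta+d}\,\Bigl(1 - C\sqrt{n h^{2\beta+d}}\Bigr).
\end{align*}

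The final step is to balance the two factors by choosing $h = C'n^{-1/(2\beta+d)}$ with $C'$ small enough that the parenthesis is bounded below by $1/2$; since $M^d h^{\beta+d} = h^\beta \asymp n^{-\beta/(2\beta+d)}$, the desired lower bound follows after absorbing the dependence on $L, \beta, \sigma^2, \ul p, \ol p, d$ into a single constant $c$. The main technical subtlety is the smoothness verification across cell interfaces, which is handled cleanly by the compact support of $K$; the remainder is the standard cube-based lower bound with the usual bias-variance-style balance.
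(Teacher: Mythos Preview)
Your proposal is correct and follows essentially the same route as the paper: a hypercube-of-bumps construction, verification that each hypothesis lies in $\mH(L,\beta)$, computation of the $L^1(\P_\sX)$ separation and the Kullback--Leibler divergence between neighbouring hypotheses, application of Assouad's lemma, and the balance $h \asymp n^{-1/(2\beta+d)}$. The only cosmetic differences are that the paper uses vertices in $\{0,1\}^m$ rather than $\{\pm 1\}^{M^d}$, builds the bump kernel explicitly from $x\mapsto\exp(-1/(1-x^2))$, and invokes the Kullback--Leibler form of Assouad's lemma directly (Tsybakov, eq.~(2.28)) instead of passing through Pinsker's inequality.
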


        \begin{proof}
            We wish to lower bound the quantity
            \begin{align}
                \label{eq.minimax1}
                \inf_{\wh f} \sup_{f \in \mH(L, \beta)} \E\big[\|\wh f - f\|_{L^1(\P_{\sX})}\big],
            \end{align}
            where the infimum is taken over all estimators. To do so, we will apply Assouad's lemma \cite[Lemma 2.12]{MR2724359}. Prior to that, we proceed by reducing our problem to the estimation of a vector in Hamming distance. For $p \in [1, \infty),$ denote by $\|\cdot\|_{L^p}$ the $L^p$ norm with respect to the Lebesgue measure on $[-1, 1]^d.$ Consider a kernel $\Phi \colon \RR^d \to \RR$ satisfying
            \begin{enumerate}
                \item [$(i)$] $\Phi$ is supported on $[-1, 1]^d,$
                \item [$(ii)$] $\Phi \in \mH(1, \beta),$
                \item [$(iii)$] $0 < \|\Phi\|_{L^1} =: C_{\Phi} < \infty,$ 
                \item [$(iv)$] $\|\Phi\|_{\infty} \leq 1.$
            \end{enumerate}
            To see that such kernel function exists, consider the univariate function $\psi \colon x \mapsto \exp(-1/(1 - x^2))\mathds{1}(|x| < 1).$ This function is infinitely smooth on the real line, and all its derivatives vanish outside of $(-1, 1).$ Now, consider the $d$-variate function $g(x) = \prod_{i=1}^d \psi(x_i).$ For any $\beta = \ell + \gamma,$ the quantity $K_{\beta} := \max\{\|\partial^{\balpha}g\|_{\infty} = \|\prod_{i=1}^d \psi^{(\alpha_i)}\|_{\infty}: |\balpha| \leq \ell\}$ is well-defined because the all the derivatives of $\psi$ are well-defined and bounded in infinite norm. Let $\Phi(x) := g(x)/(2\sqrt{d}K_{\beta}).$ This function trivially fulfils $(i)$ and $(iv),$ its derivatives are all at most $1$-Lipschitz up to order $\ell.$ Finally, for all $x, y \in [-1, 1]^d$ and all $|\balpha| = \ell,$ it holds that
            \begin{align*}
                \big|\partial^{\balpha}g(x) - \partial^{\balpha}g(y)\big|& \leq \frac{1}{2\sqrt{d}}\|x - y\|\\
                &\leq \frac{1}{2\sqrt{d}}\|x - y\| \mathds{1}(\|x - y\| \leq 1) + \mathds{1}(\|x - y\|>1)\\
                &\leq \frac{1}{2\sqrt{d}}\|x - y\|^\gamma\mathds{1}(\|x - y\|\leq 1) + \|x - y\|^\gamma\mathds{1}(\|x - y\|>1)\\
                &\leq \|x - y\|^\gamma.
            \end{align*}
            This shows that $\Phi$ is an example of such kernel function. Let $z_1, \dots, z_m$ be the centres of the elements of a packing of $[0, 1]^d$ by hypercubes of side-length $2h$ where $0 < h < 2^{-1/d}$ will be specified later. There exists a $c_P = c_P(d) > 0$ such that 
            \begin{align}
            \label{eq.bound.on.packing.number}
                c_P^{-1} h^{-d} \leq m \leq c_Ph^{-d}.
            \end{align}
            Consider $\Omega := \{0, 1\}^m$ and define $M = 2^m$ alternatives as follows: for all $w \in \Omega,$ set
            \begin{align*}
                f_w = \sum_{i=1}^M w_i \phi_i,
            \end{align*}
            where 
            \begin{align*}
                \phi_i(x) := Lh^{\beta}\Phi\bigg(\frac{x - z_i}{h}\bigg), \qquad 1 \leq i \leq m.
            \end{align*}
            Because the $z_i$ are $2h$-separated, and the $\phi_i$ are supported on $\B(z_i, h),$ the $\phi_i$ have disjoint support. Since moreover $\phi_i \in \mH(L, \beta)$ for all $1 \leq i \leq m,$ it holds that $f_w \in \mH(L, \beta)$ for all $w \in \Omega.$ We can now lower bound \eqref{eq.minimax1}:
            \begin{align}
                \label{eq.minimax2}
                \inf_{\wh f} \sup_{f \in \mH(L, \beta)} \E\big[\|\wh f - f\|_{L^1(\P_{\sX})}\big] &\geq \inf_{\wh f} \max_{w \in \Omega} \E\big[\|\wh f - f_w\|_{L^1(\P_{\sX})}\big].
            \end{align}
            Further, because the supports of the $\phi_i$ are mutually disjoint, for any estimator $\wh f$ and any $w \in \Omega,$
            \begin{align*}
                \big\|\wh f - f_w\big\|_{L^1(\P_{\sX})} &= \sum_{i = 1}^M \underbrace{\int_{\B_i} \big|\wh f(x) - w_i\phi_i(x)\big|\, d\P_{\sX}(x)}_{=: T_i(w_i, \wh f)},
            \end{align*}
            where $\B_i$ denotes the ball $\B(z_i, h).$ This leads, just as in Example 2.2 in \cite{MR2724359}, to define $\wh w_i := \argmin_{t \in \{0, 1\}}T_i(t, \wh f).$ Let $\wh w := (\wh w_1, \dots, \wh w_M).$ Using the triangle inequality and the definition of $T_i$ shows that
            \begin{align*}
                \big\|f_{\wh w} - f_w\big\|_{L^1(\P_{\sX})} &= \sum_{i=1}^M \int_{\B_i} \big|(\wh w_i - w_i)\phi_i(x)\big|\, d\P_{\sX}(x)\\
                &\leq \sum_{i=1}^M \bigg[\int_{\B_i} \big|\wh f(x) - w_i\phi_i(x)\big|\, d\P_{\sX}(x)\\
                &\quad + \int_{\B_i} \big|\wh f(x) - \wh w_i\phi_i(x)\big|\, d\P_{\sX}(x)\bigg]\\
                &= \sum_{i=1}^M \Big[T_i(w_i, \wh f) + T_i(\wh w_i, \wh f)\Big]\\
                &\leq 2\sum_{i=1}^M T_i(w_i, \wh f) = 2\big\|\wh f - f_w\big\|_{L^1(\P_{\sX})}.
            \end{align*}
            This last inequality and the mutually disjoint supports of the $\phi_i$'s enable us to reduce \eqref{eq.minimax2} further to
            \begin{align}
                \label{eq.red1}
                \inf_{\wh f} \max_{w \in \Omega} \E\big[\|\wh f - f_w\|_{L^1(\P_{\sX})}\big] \geq \frac 12\inf_{\wh w}\max_{w \in \Omega} \|f_{\wh w} - f_w\|_{L^1(\P_{\sX})}.
            \end{align}
            Next, we observe that
            \begin{align*}
                \|f_{\wh w} - f_w\|_{L^1(\P_{\sX})} &= \int \sum_{i=1}^m |\wh w_i - w_i|\phi_i(x)\, d\P_{\sX}(x)\\
                &= \sum_{i=1}^m|\wh w_i - w_i|\int \phi_i(x)\, d\P_{\sX}(x)\\
                &\geq \Ham(\wh w, w)\ul p \int \phi_1(x)\, dx = \Ham(\wh w, w)\ul p Lh^{\beta + d}\|\Phi\|_{L^1},
            \end{align*}
            where $\ul p$ denotes the assumed lower bound on the density of $\P_{\sX}$ on $[0,1]^d$ and $\Ham(\wh w, w)$ is the Hamming distance between $\wh w$ and $w$ defined as $\Ham(\wh w, w) := \sum_{i=1}^M \mathds{1}(\wh w_i \neq w_i).$ Applying this last inequality to the right-hand side of \eqref{eq.red1} shows that
            \begin{align*}
                \frac 12\inf_{\wh w}\max_{w \in \Omega} \|f_{\wh w} - f_w\|_{L^1(\P_{\sX})} \geq \frac{Lh^{\beta + d}\ul p}2\inf_{\wh w}\max_{w \in \Omega} \E\big[\Ham(\wh w, w)\big]\|\Phi\|_{L^1},
            \end{align*}
             We are now ready to apply Assouad's lemma in the Kullback-Leibler version \cite[equation (2.28)]{MR2724359},
            \begin{align*}
                \inf_{\wh w}\max_{w \in \Omega} \E\big[\Ham(\wh w, w)\big] &\geq \frac m2 \Big(e^{-\alpha}\vee (1 - \sqrt{\alpha/2})\Big),
            \end{align*}
            where $\alpha := \max_{w_1, w_2 \in \Omega : \Ham(w_1, w_2) = 1}\KL(\P_{w_1}, \P_{w_2})$ and $\P_{w}$ is the distribution of an i.i.d.\ sample $(X_1, Y_1), \dots, (X_n, Y_n)$ under model \eqref{eq.regression.model} when the underlying regression function is $f_w.$ Using $\KL(\mN(\mu_1, \sigma^2), \mN(\mu_2, \sigma^2)) = (\mu_1 - \mu_2)^2/(2\sigma^{2}),$ $\KL(\P^{\otimes n}, \Q^{\otimes n}) = n\KL(\P, \Q),$ and $\|\Phi\|_{L^2}^2\leq \|\Phi\|_{\infty} \|\Phi\|_{L^1}\leq \|\Phi\|_{L^1}$ lead to
            \begin{align*}
                \alpha &= \max_{\substack{w_1, w_2 \in \Omega\\ \Ham(w_1, w_2) = 1}}\frac{n\|f_{w_1} - f_{w_2}\|^2_{L^2(\P_{\sX})}}{2\sigma^2}\\
                &\leq \frac{n}{2\sigma^2}\max_{1 \leq i, j \leq m}\big\{\|\phi_i\|^2_{L^2(\P_{\sX})} + \|\phi_j\|_{L^2(\P_{\sX})}^2\big\}\\
                &= \frac{n}{\sigma^2}\max_{1 \leq i \leq m}\big\|\phi_i\big\|_{L^2(\P_{\sX})}^2 \leq \frac{nL^2h^{2\beta + d}\ol p}{\sigma^2}\|\Phi\|_{L^2}^2 \leq \frac{nL^2h^{2\beta + d}\ol p}{\sigma^2}\|\Phi\|_{L^1}.
            \end{align*}
            Pick $h = (\lambda L^2\|\Phi\|_{L^1} \ol p n/\sigma^2)^{-1/(2\beta + d)}$ with $\lambda > 0.$ Then $\alpha \leq \lambda.$ Recall that $c_P$ is a constant defined in \eqref{eq.bound.on.packing.number}. Combining all of the previous results leads to 
            \begin{align*}
                \inf_{\wh f} \sup_{f \in \mH(L, \beta)} \E\big[\|\wh f - f\|_{L^1(\P_{\sX})}\big] &\geq \frac{mLh^{\beta + d}\ul p\|\Phi\|_{L^1}}{4}\Big(e^{-\alpha}\vee (1 - \sqrt{\alpha/2})\Big)\\
                &\overset{(i)}{\geq} \frac{Lh^{\beta}\|\Phi\|_{L^1}\ul p}{4c_P}e^{-\alpha}\\
                &\geq \frac{L\|\Phi\|_{L^1}\ul p}{4c_P}\bigg(\frac{\sigma^2}{\lambda n L^2 \|\Phi\|_{L^1}^2\ol p}\bigg)^{\beta/(2\beta + d)}e^{-\lambda}.
            \end{align*}
            Step $(i)$ uses $m \geq c_P^{-1}h^{-d}$ from \eqref{eq.bound.on.packing.number}. Choosing $\lambda \geq 2^{(2\beta + d)/(\beta d)}\sigma^2/(L^2\|\Phi\|_{L^1}\ol p)$ guarantees that $h \leq 2^{-1/d}.$ Finally, for all $n\geq 1,$ it holds that there exists $c = c(L, \beta, \sigma^2, \ul p, \ol p, d) > 0$ such that
            \begin{align*}
                \inf_{\wh f} \sup_{f \in \mH(L, \beta)} \E\big[\|\wh f - f\|_{L^1(\P_{\sX})}\big] &\geq cn^{-\beta/(2\beta + d)}.
            \end{align*}
        \end{proof}

\section*{Acknowledgements}
    P.Z.\ would like to thank Mathias Trabs for an additional helpful reference.
\printbibliography
\end{document}